\newtheorem{thm}{Theorem}[section]
\newtheorem{prop}{Proposition}[section]
\newtheorem{lem}{Lemma}[section]
\newcommand{\R}{\mathbb{R}}
\numberwithin{equation}{section}
\newcommand{\N}{\mathbb{N}}
\newcommand{\bb}{B\!B\!l}
\title{Non-stability of Paneitz-Branson type equations in arbitrary dimensions.}
\author{Laurent Bakri \footnote{Departamento de Matem\'atica, Universidad T\'ecnica Federico Santa Mar\'ia, 1680 Av. espa\~na
Valpara\'iso, Chile. E-mail :laurent.bakri@gmail.com.} \footnote{The first author was supported by PROYECTO BASAL PFB03 CMM, Universidad de Chile, Santiago (Chile).}
\and  Jean-Baptiste Casteras \footnote{UFRGS, Instituto de Matem\'atica, Av. Bento Goncalves 9500, 91540-000 Porto
Alegre-RS, Brasil Phone : (55) 51 3308-6208. E-mail Jean-Baptiste.Casteras@univ-brest.fr} \footnote{The second author was supported by the CNPq (Brazil) project 501559/2012-4. }}
\date{}
\begin{document}
\maketitle
\begin{abstract}
Let $(M,g)$ be a compact riemannian manifold of dimension $n\geq 5$. We consider a Paneitz-Branson type equation with general coefficients 
\begin{equation}
\label{abstr}\tag{E}
\Delta_g^2 u-div_g (A_g du) +h u=|u|^{2^\ast-2-\varepsilon}u\ on\ M,
\end{equation}
where $A_g$ is a smooth symmetric $(2,0)$-tensor, $h\in C^\infty (M)$, $2^\ast =\dfrac{2n}{n-4}$ and $\varepsilon$ is a small positive parameter. Assuming that there exists a positive nondegenerate solution of \eqref{abstr} when $\varepsilon=0$ and under suitable conditions, we construct solutions $u_\varepsilon$ of type $(u_0-\bb_\varepsilon)$ to \eqref{abstr} which blow up at one point of the manifold when $\varepsilon$ tends to $0$ for all dimensions $n\geq 5$. \vspace{0,3cm}
\end{abstract}

\noindent{\bf Keywords:} Paneitz-Branson type equations, blow up solutions, Liapunov-Schmidt reduction procedure. \\
\phantom{}\newline
{\bf Mathematics Subject Classification (2010) :} 35J30, 35J60, 35B33,35B35
\section{Introduction and statements of the results}
Let $(M,g)$ be a compact riemannian manifold of dimension $n\geq 5$. We will be interested in solutions $u\in C^{4,\theta}(M)$, $\theta \in (0,1)$, of the following equation
 \begin{equation}
\label{eqin}
P_g u:= \Delta_g^2 u-div_g (A_g du) +h u=|u|^{2^\ast-2}u,
\end{equation}
where $A_g$ is a smooth symmetric $(2,0)$-tensor, $h\in C^\infty (M)$ and $2^\ast =\dfrac{2n}{n-4}$. Following the terminology introduced in \cite{djadli2000paneitz}, the operator $P_g$ has been referred to as a Paneitz-Branson type operator with general coefficients. When $A_g$ is given by
\begin{equation}
\label{paneitz}
A_g= A_{paneitz}:=\dfrac{(n-2)^2+4}{2(n-1)(n-2)}R_g g -\dfrac{4}{n-2}Ric_g,
\end{equation}
where $R_g$ (resp. $Ric_g$) stands for the scalar curvature (resp. Ricci curvature) with respect to the metric $g$, and $h=\dfrac{n-4}{2}Q_g$ where $Q_g$ is the $Q$-curvature with respect to the metric $g$ which is defined by
$$Q_g=\dfrac{1}{2(n-1)}\Delta_g R_g+\dfrac{n^3 -4n^2+16n - 16}{8(n-1)^2(n-2)^2}R_g^2-\dfrac{2}{(n-2)^2}|Ric_g|_g^2,$$
then $P_g$ is the so-called Paneitz-Branson operator and equation \eqref{eqin} is referred to as the Paneitz-Branson equation. 
It is well known that the Paneitz operator is conformally invariant, i.e. if $\tilde{g}=\varphi^{\frac{4}{n-4}}g$ then, for all $u\in C^\infty (M)$, we have
$$P_g^n (u\varphi)= \varphi^{\frac{n+4}{n-4}}P_{\tilde{g}}^n (u).$$
We also point out that if $(M,g)$ is Einstein ($Ric_g =\lambda g$, $\lambda\in \R$), then the Paneitz-Branson operator takes the form
\begin{equation}
\label{paneitzcons}
P_g u =\Delta^2_g u +b\Delta_g u +cu,
\end{equation}
where $b=\dfrac{n^2-2n-4}{2(n-1)}\lambda$ and $c=\dfrac{n(n-4)(n^2-4)}{16(n-1)^2}\lambda$. More generally, when $b$ and $c$ are two real numbers, the operator $P_g$ defined in \eqref{paneitzcons} is referred to as a Paneitz-Branson type operator with constant coefficients. 
Existence, compactness and stability of solutions to \eqref{eqin} when $P_g$ is a Paneitz-Branson type operator with constant coefficients, have been widely investigated this last decade (see for example \cite{felli2005fourth,hebey2001coercivity,MR2819586,qing2006compactness,wei2013non} and the references therein). However, less is known for solutions of \eqref{eqin} in the case where $P_g$ is a Paneitz-Branson type operator with general coefficients. Esposito and Robert \cite{MR1942129} proved the existence of a non trivial solution to \eqref{eqin} under the hypothesis that $n\geq 8$ and $\displaystyle\min_{M} Tr_g (A_g-A_{paneitz})<0$.  In \cite{MR2136976}, Sandeep studied the stability of equation \eqref{eqin} in the following sense : he considered sequences of positive solutions $(u_\alpha)_\alpha$ of
$$\Delta^2_g u_\alpha -div_g (A_\alpha du_\alpha)+a_\alpha u_\alpha = u^{2^\ast -1}_\alpha,\ u_\alpha \in C^{4,\theta},$$
where $A_\alpha$ are smooth $(2,0)$ symmetric tensors and $a_\alpha$ are smooth functions. Sandeep proved that if $A_\alpha$ converges in $C^1(M)$ to a smooth symetric tensor $A_g$, $a_\alpha$ converges in $C^0(M)$ to a smooth positive function $a$ and $u_\alpha$ converges weakly in $H^2(M)$ to a function $u_0$, then $u_0$ is nontrivial provided that $A_g-A_{paneitz}$ is either positive or negative definite (generalizing a result of \cite{hebey2006compactness}). Recently, Pistoia and Vaira \cite{pistoia2012stability} studied the stability of \eqref{eqin} when it is the Paneitz-Branson equation, namely they considered the following equation
\begin{equation}
\label{pisvai}
\Delta^2_g u -div_g ((A_{paneitz}+\varepsilon B)du) +Q_g u =|u|^{2^\ast -2}u,
\end{equation}
where $\varepsilon$ is a small positive parameter and $B$ is a smooth symmetric $(2,0)$ tensor. 
They proved that if $(M,g)$ is not conformally flat, $n\geq 9$ and there exists $\xi_0 \in M$ a $C^1$ stable critical point  (see below for the definition) of the function $\xi \rightarrow \dfrac{Tr_g B(\xi)}{|Weyl_g (\xi)|_g}$, such that $Tr_g B(\xi_0)>0$, then equation \eqref{pisvai} is not stable, i.e. there exists $\varepsilon_0>0$ such that, for any $\varepsilon\in (0,\varepsilon_0)$, equation \eqref{pisvai} admits a solution $u_\varepsilon$ such that $u_\varepsilon (\xi_0) \underset{\varepsilon \rightarrow 0^+}{\longrightarrow}+\infty$. 
 
The aim of this paper is to investigate the stability in the sense of Deng-Pistoia of \eqref{eqin}. We say that \eqref{eqin} is stable if, for any sequences of real positive numbers $(\varepsilon_\alpha)_\alpha$ such that $\varepsilon_\alpha \underset{\alpha \rightarrow \infty}{\longrightarrow } 0$ and for any sequences of solutions $(u_\alpha)_\alpha \in C^{4,\theta}(M)$, $\theta \in (0,1)$, of
\begin{equation}
\label{eq}
\Delta_g^2 u_\alpha-div_g (A_g du_\alpha) +h u_\alpha=|u_\alpha|^{2^\ast-2-\varepsilon_\alpha}u_\alpha,
\end{equation}
bounded in $H^{2}(M)$, then up to a subsequence, $u_\alpha$ converges in $C^4(M)$ to some smooth function $u$ solution of \eqref{eqin}. Deng and Pistoia \cite{MR2859126} proved that \eqref{eqin} is not stable if
\begin{enumerate}
\item $n\geq 7$, $Tr_g (A_g-A_{paneitz})$ is not constant and $\displaystyle\min_M Tr_g (A_g-A_{paneitz})>0$,
\item or $n\geq 8$ and $\xi_0\in M$ a $C^1$ stable critical point of $Tr_g (A_g-A_{paneitz})$ such that $Tr_g(A_g-A_{paneitz})(\xi_0)>0$. 
\end{enumerate}
Our main result shows that under suitable assumptions, equation \eqref{eqin} is not stable for any $n\geq 5$. 
In fact, inspired by the recent result of Robert and V\'etois \cite{robert2012sign} on scalar curvature type equations, we investigate the existence of
families $(u_\varepsilon)_\varepsilon \in C^{4,\theta}(M)$ of  blow-up solutions to \eqref{eq} of type $(u_0-\bb_\varepsilon)$.
Following the terminology of Robert and V\'etois, 
we say that a blow-up sequence $(u_\varepsilon)_\varepsilon$ is of type $(u_0-\bb_\varepsilon )$ if there exists $u_0\in C^{4,\theta}(M)$ 
and a bubble $\bb_\varepsilon(x)=[n(n-4)(n^2-4)]^{\frac{n-4}{8}} \left(\dfrac{\mu_\varepsilon}{\mu_\varepsilon+d_g(x,x_\varepsilon)^2}\right)^{\frac{n-4}{2}}$,
where $x,x_\varepsilon\in M$ and $\mu_\varepsilon\in \R^+$ is such that $\mu_\varepsilon \underset{\varepsilon\rightarrow 0}{\longrightarrow }0$, such that
$$u_\varepsilon=u_0-\bb_\varepsilon+o(1),$$
where $o(1)\underset{\varepsilon\rightarrow 0}{\longrightarrow }0$.
Before stating more precisely the results, we would like to recall that a solution of \eqref{eq} is called nondegenerate if the kernel of the linearization of the equation is trivial (see \eqref{nondegdef}). Let $\phi\in C^1(M)$, we also recall that a critical point $\xi_0$ of $\phi$ is said $C^1$ 
stable if there exists an open neighborhood $\Omega$ of $\xi_0$ such that, for any point $\xi \in \bar{\Omega}$, there holds $\nabla_g \phi (\xi)=0$ if and only if $\xi=\xi_0$ and such that the Brower degree $deg (\nabla_g \phi, \Omega ,0)\neq 0$. We obtain :

\begin{thm}
\label{thm1}
Let $(M,g)$ be a compact riemannian manifold of dimension $n$, $A_g$ and $h$ be such that $P_g$ is coercive. Let $u_0\in C^{4,\theta}$, $\theta \in (0,1)$, be a positive nondegenerate solution of \eqref{eqin}.
Assume in addition that one of the following condition holds:
\begin{enumerate}
\item $5\leq n<7$, \item  $8\leq n \leq 13$ and there exists $\xi_0\in M$ a $C^1$ stable critical point of 
\begin{equation}
\begin{multlined}
\varphi (\xi) =\dfrac{(n-1)}{(n-6)(n^2-4)}(Tr_g (A_g- A_{paneitz}))(\xi) \ \ \ \ \ \ \ \ \ \ \ \ \ \  \\ 
+ \dfrac{2^{n} u_0(\xi)\omega_{n-1}}{(n+2) (n (n-4)(n^2-4))^{\frac{n-4}{8}} \omega_n}1_{n=8},\ \xi\in M,
\end{multlined}
\end{equation}
such that $\varphi (\xi_0)>0$,
\item  $n>13$ and $\displaystyle\min_{M}Tr_g(A_g-A_{paneitz})>0$,
\end{enumerate}
then, for any $\varepsilon>0$, there exists a solution $u_\varepsilon$ of type $u_0-\bb_\varepsilon$ to \eqref{eq}. In particular, \eqref{eq} is not stable.
\end{thm}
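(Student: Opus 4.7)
The plan is to implement a Lyapunov-Schmidt finite-dimensional reduction in the style of Robert-V\'etois \cite{robert2012sign} and Deng-Pistoia \cite{MR2859126}. I would look for solutions of \eqref{eq} of the form
\begin{equation*}
u_{\varepsilon,\mu,\xi}=u_0-W_{\mu,\xi}+\phi_{\varepsilon,\mu,\xi},
\end{equation*}
where $W_{\mu,\xi}$ is a normal-coordinate, cut-off lift of the Euclidean bubble $\bb_\varepsilon$ at scale $\mu>0$ and centre $\xi\in M$, and $\phi_{\varepsilon,\mu,\xi}$ is an $H^2$-small remainder to be determined. I would decompose $H^2(M)=K_{\mu,\xi}\oplus K_{\mu,\xi}^{\perp}$, where $K_{\mu,\xi}$ is the $(n+1)$-dimensional approximate kernel spanned by $\partial_\mu W_{\mu,\xi}$ and $\partial_{\xi_i}W_{\mu,\xi}$. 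The coercivity of $P_g$, the nondegeneracy of $u_0$, and the classical spectral picture for the linearized bubble equation on $\R^n$ combine to give a uniform invertibility estimate on $K_{\mu,\xi}^{\perp}$ for the linearization of \eqref{eq} at $u_0-W_{\mu,\xi}$.

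A contraction-mapping argument should then provide, for each small $\varepsilon>0$ and each $(\mu,\xi)$ in an admissible range, a unique $\phi_{\varepsilon,\mu,\xi}\in K_{\mu,\xi}^{\perp}$ solving the projected auxiliary equation, together with sharp estimates of the form $\|\phi_{\varepsilon,\mu,\xi}\|_{H^2}=o(\mu^{(n-4)/2})$ and smoothness in $(\mu,\xi)$. Critical points of the reduced functional $\widetilde{J}_\varepsilon(\mu,\xi)=J_\varepsilon\bigl(u_0-W_{\mu,\xi}+\phi_{\varepsilon,\mu,\xi}\bigr)$, with $J_\varepsilon$ the energy of \eqref{eq}, then correspond to genuine solutions of type $u_0-\bb_\varepsilon$. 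Expanding $\widetilde{J}_\varepsilon$ in $\varepsilon$ and $\mu$ should reveal three competing contributions: an interaction term $\sim c_n u_0(\xi)\mu^{(n-4)/2}$ coming from $\int u_0 W_{\mu,\xi}^{2^{\ast}-1}$; a geometric term coming from $P_g-\Delta_g^2$, whose leading order is proportional to $Tr_g(A_g-A_{paneitz})(\xi)$ with a dimension-dependent power of $\mu$ (and an extra $|\log\mu|$ at the resonant dimension $n=8$); and a subcritical perturbation term $\sim\varepsilon$ arising from the $|u|^{2^{\ast}-2-\varepsilon}u$ nonlinearity. Balancing these contributions fixes the concentration scale $\mu_\varepsilon$ and reduces the $\xi$-problem to a finite-dimensional problem governed by the function $\varphi$ of the statement.

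The three dimensional regimes then correspond to which contributions dominate after the balance. In low dimension $5\le n<7$ the positive interaction term $u_0(\xi)\mu^{(n-4)/2}$ dominates all $\xi$-dependent terms, so $\widetilde{J}_\varepsilon$ admits a critical point in $(\mu,\xi)$ regardless of $A_g$; for $8\le n\le 13$ the interaction and geometric terms are of the same order, giving the composite function $\varphi$ (the indicator $\mathbf{1}_{n=8}$ reflecting the logarithmic resonance), and the $C^1$-stable critical point hypothesis $\xi_0$ with $\varphi(\xi_0)>0$ produces a critical point of $\widetilde{J}_\varepsilon$ near $\xi_0$ by a Brower-degree continuity argument; for $n>13$ the geometric term dominates and $\min_M Tr_g(A_g-A_{paneitz})>0$ suffices to locate a critical point. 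The main obstacle is precisely the sharp, uniform-in-$(\mu,\xi)$ expansion of the reduced energy, and in particular the careful handling of the Green's function of $P_g$ and of the bubble-$u_0$ interaction integral in each dimensional regime; the resonant dimension $n=8$, which is the source of the $\mathbf{1}_{n=8}$ correction in $\varphi$, is the most delicate point.
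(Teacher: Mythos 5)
Your skeleton matches the paper's: the same ansatz $u_0-W_{\delta_\varepsilon(t),\xi}+\phi_{\delta_\varepsilon(t),\xi}$ with a free scaling parameter $t$, the approximate kernel $K_{\delta,\xi}$ spanned by $Z_{\delta,\xi}$ and $Z_{\delta,\xi,e_i}$, the contraction-mapping resolution of the orthogonal equation, and the reduced energy $I_\varepsilon(t,\xi)$. But your reading of the dimensional regimes is off in a way that would matter if you tried to fill in the details. The $u_0$-interaction term scales like $\delta^{(n-4)/2}$ and the anisotropy term proportional to $Tr_g(A_g-A_{paneitz})$ like $\delta^2$; with $\delta_\varepsilon(t)\sim\sqrt{t\varepsilon}$ they are of the same order (namely $\varepsilon$) only at $n=8$, not across the whole range $8\le n\le 13$. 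For $9\le n\le 13$ the interaction term is already $o(\varepsilon)$ --- that is exactly why $u_0(\xi)$ appears in $\varphi$ only through $1_{n=8}$ --- and the $1_{n=8}$ is not a reflection of a logarithmic resonance: the $|\ln\delta|$ at $n=8$ arises in the error bound on $\phi$ (Lemma \ref{reste}), not in $\varphi$ itself.

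The genuinely missing idea is what separates $8\le n\le 13$ from $n>13$. It is not a question of which term dominates (the anisotropy term dominates in both ranges); it is a regularity question. The paper only proves the expansion \eqref{fin} to be $C^1$-uniform in $(t,\xi)$ when $8\le n\le 13$ (Lemma \ref{c1est}), because the estimates on $\partial_t\phi$, $\partial_\xi\phi$ and the associated Lagrange multipliers degrade as $n$ grows. That $C^1$-uniformity is precisely what licenses the Brower-degree continuation from a $C^1$-stable critical point of $\varphi$. For $n>13$ one only has a $C^0$-uniform expansion, which forces the stronger sign hypothesis $\displaystyle\min_M Tr_g(A_g-A_{paneitz})>0$, so that $\varphi>0$ everywhere and $G(t,\xi)=-c_4(n)\ln t+c_1(n)\varphi(\xi)t$ is coercive in $t$ uniformly in $\xi$, allowing a global minimization argument in place of degree theory. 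The same minimization argument handles $5\le n<7$, where $\varphi$ is a positive multiple of $u_0(\xi)$ and positivity is automatic.
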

Let us notice that in the geometric case i.e. when $A_g=A_{paneitz}$, the previous theorem only applies if $5\leq n\leq 8$. However, with a small modification of the proof, we can construct a solution of type $u_0-\bb_\varepsilon$ to \eqref{eq} when $5\leq n\leq 11$ and $A_g=A_{paneitz}$. More precisely, we prove the following result :
\begin{thm}
\label{thm2}
Let $(M,g)$ be a compact riemannian manifold of dimension $n$, $A_g$ and $h$ be such that $P_g$ is coercive. Let $u_0\in C^{4,\theta}$, $\theta \in (0,1)$, be a positive nondegenerate solution of \eqref{eqin}. Assume that $A_g=A_{paneitz}$. Then, for any $5\leq n\leq 11$ and any $\varepsilon>0$, there exists a solution $u_\varepsilon$ of type $u_0-\bb_\varepsilon$ to \eqref{eq}. In particular, \eqref{eq} is not stable.
\end{thm}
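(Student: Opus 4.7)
The plan is to mimic the Lyapunov--Schmidt reduction that proves Theorem \ref{thm1}, and to identify the step at which the assumption $A_g=A_{paneitz}$ produces enough cancellation to push the admissible dimension range up to $n=11$. First I would seek a solution of \eqref{eq} in the form $u_\varepsilon=u_0-W_{\mu,\xi}+\phi$, where $W_{\mu,\xi}$ is a smooth cut-off of the bubble $\bb_\varepsilon$ of concentration parameter $\mu=\mu(\varepsilon)\to 0$ centered at some $\xi\in M$, and the correction $\phi$ is taken in the $L^2$--orthogonal complement of the approximate kernel of the linearization at $-W_{\mu,\xi}$ of the operator $v\mapsto P_g v-(2^\ast-1-\varepsilon)|W_{\mu,\xi}|^{2^\ast-2-\varepsilon}v$. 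This kernel is $(n+1)$--dimensional, spanned by $\partial_\mu W_{\mu,\xi}$ and by the $n$ functions $\partial_{\xi_i}W_{\mu,\xi}$. The coercivity of $P_g$ together with the nondegeneracy of $u_0$ yields a uniform invertibility estimate for the projected linearized operator in weighted norms adapted to the fourth--order setting.

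A standard contraction mapping then produces, for every $(\mu,\xi)$ in a suitable range, a unique small $\phi=\phi_{\varepsilon,\mu,\xi}$ solving the projected equation, together with sharp estimates. Lyapunov--Schmidt theory reduces the original problem to locating a critical point of a reduced functional $J_\varepsilon(\mu,\xi)=\mathcal{E}_\varepsilon(u_0-W_{\mu,\xi}+\phi_{\varepsilon,\mu,\xi})$, where $\mathcal{E}_\varepsilon$ is the natural energy of \eqref{eq}. I would then expand $J_\varepsilon$ as $\varepsilon,\mu\to 0$; schematically, the expansion takes the form
\[
J_\varepsilon(\mu,\xi)=c_0+c_1\,\varepsilon\log(1/\mu)+c_2\,\mu^{\frac{n-4}{2}}u_0(\xi)+(\textrm{geometric terms})+o(\cdots),
\]
where the $c_i$ depend only on $n$, and the geometric terms encode the curvature of $g$ and the deviation of $A_g$ from $A_{paneitz}$.

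The crucial point is the geometric expansion. In the proof behind Theorem \ref{thm1}, the leading geometric contribution is $\mu^{2}\,Tr_g(A_g-A_{paneitz})(\xi)$; this is the mechanism forcing the sign condition on the trace and constraining the admissible dimensions. When $A_g=A_{paneitz}$ this contribution vanishes identically, and I would carry out the expansion in $g$--conformal normal coordinates at $\xi$, exploiting the conformal covariance of the Paneitz operator to put the bubble in essentially Euclidean form. In such coordinates $\sqrt{\det g}=1+O(|x|^{N})$ for any prescribed $N$, and the Taylor expansion of the metric annihilates the would--be $\mu^{2}$ curvature contribution, so that the next surviving purely geometric term is of order $\mu^{4}\,|Weyl_g(\xi)|_g^{2}$ (plus a logarithmic correction when $n=8$). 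A careful bookkeeping then shows that for $5\leq n\leq 11$ the interaction term $\mu^{(n-4)/2}u_0(\xi)$ strictly dominates every geometric correction, so the only remaining balance is between $\varepsilon\log(1/\mu)$ and $\mu^{(n-4)/2}u_0(\xi)$. Since $u_0>0$ on $M$ and $M$ is compact, $u_0$ attains a $C^{0}$--stable maximum, and a Brouwer degree argument then produces a critical point $(\mu_\varepsilon,\xi_\varepsilon)$ of $J_\varepsilon$ for every small $\varepsilon>0$; the resulting $u_\varepsilon$ is the desired blow--up solution of type $u_0-\bb_\varepsilon$.

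The main obstacle is the refined energy expansion. Controlling the cross--integrals between $u_0$ and the bubble, which produce the leading interaction at order $\mu^{(n-4)/2}$, together with the subdominant fourth--order contributions coming from $div_g(A_{paneitz}\,du)$ and the $Q$--curvature implicit in $h$, requires sharp pointwise estimates on $W_{\mu,\xi}$ and on $\phi_{\varepsilon,\mu,\xi}$. The reason the low dimension $n=7$ (not covered by Theorem \ref{thm1}) becomes accessible here is precisely that the $\mu^{2}\,Tr_g(A_g-A_{paneitz})$ term, which would otherwise dominate the $u_0$--bubble interaction as soon as $n-4<4$, is now absent; with it gone, no intermediate--order geometric term appears to compete with the interaction, and the reduction closes uniformly in the whole range $5\leq n\leq 11$.
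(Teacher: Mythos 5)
Your core mechanism is correct — once $A_g=A_{paneitz}$ kills the $\delta^2\,Tr_g(A_g-A_{paneitz})$ term, the $u_0$–bubble interaction at order $\delta^{(n-4)/2}$ becomes the leading geometric term, and it dominates the residual $O(\delta^4)$ error precisely when $n\leq 11$. This is also what drives the paper's proof: they redefine $\delta_\varepsilon(t)=(t\varepsilon)^{2/(n-4)}$ for $9\leq n\leq 11$ (they explicitly flag this as ``the main difference''), which forces $\delta^{(n-4)/2}=t\varepsilon$ to sit at order $\varepsilon$ where it can balance $\varepsilon\ln\delta$, and they simply check that the remainder in \eqref{enei3} is $O(\delta^4)=O(\varepsilon^{8/(n-4)})=o(\varepsilon)$ for $n<12$. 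That is the whole content; no identification of a $\mu^4|Weyl_g|^2$ coefficient is needed.

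Two pieces of your proposed route, however, contain real gaps. First, the appeal to conformal normal coordinates and the conformal covariance of the Paneitz operator does not apply here: in Theorem \ref{thm2} only $A_g=A_{paneitz}$ is assumed, while $h$ is an arbitrary smooth function making $P_g$ coercive. The operator $\Delta_g^2-div_g(A_{paneitz}\,du)+h\,u$ is then \emph{not} conformally covariant, so the Schoen-style device of transferring the bubble to ``essentially Euclidean form'' through a conformal change is not available. The paper instead works in ordinary geodesic normal coordinates with the Cartan expansion \eqref{g}; the vanishing of the $\delta^2$ term is an algebraic coefficient identity in \eqref{enei3}, not a consequence of conformal covariance. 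Your argument, as stated, leans on a property the operator does not possess.

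Second, the concluding step — locating the critical point of the reduced functional at a ``$C^0$--stable maximum'' of $u_0$ by a Brouwer degree argument — is both misdirected and unnecessary. After minimizing the reduced $G(t,\xi)=-c_4\ln t+c_1\varphi(\xi)t$ over $t$ one gets an \emph{increasing} function of $u_0(\xi)$, so the global minimum of $G$ sits above the minimum of $u_0$, not its maximum; a point above the maximum of $u_0$ would be a saddle, and a degree argument around it would require the $C^1$-uniform expansion of Proposition \ref{energiered} (which the paper only establishes for $8\leq n\leq 13$) plus an isolatedness/nondegeneracy property of that maximum that is not given. The paper avoids all of this: since $\varphi=\text{const}\cdot u_0>0$ and $c_1,c_4>0$, $G(t,\xi)\to+\infty$ as $t\to 0^+$ or $t\to\infty$ uniformly in $\xi$, so $I_\varepsilon$ attains an interior minimum on a compact set $(a,b)\times M$ using only the $C^0$-uniform convergence; that minimum is automatically a critical point, and Proposition \ref{propfin} then yields the blow-up solution. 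You should replace the degree argument by this direct minimization, which closes the proof without any extra hypothesis on $u_0$.
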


The proof of the theorems relies on a well known Lyapunov-Schmidt reduction procedure which permits to reduce the problem to a finite dimensional one for which we defined a reduced energy.
The solutions to \eqref{eq} will then be obtained as critical points of this reduced energy. We refer to \cite{ambrosetti2006perturbation} 
 and the references therein for more information on the Lyapunov-Schmidt reduction procedure. 
We would like to emphasize that the proof of Theorem \ref{thm1} is inspired by the previous work of Robert and V\'etois \cite{robert2012sign}. 
Thus we will keep their notations. We also want to point out that we use without proof computations done in Deng and Pistoia \cite{MR2859126} (for more details on these computations, see their paper).  
The plan of the paper is the following : in section  2 we introduce notations and perform the finite dimensional reduction. In section 3 we study the reduced problem and prove Theorem \ref{thm1}.
The error estimate and the $C^1$ uniform asymptotic expansion of the reduced energy are done in the appendix.
\\
\textbf{Acknowledgements} : The authors would like to thank F. Robert for his comments and suggestions on a preliminary version of this paper.
\section{Finite dimensional reduction.}
Let $(\xi_\alpha)_\alpha$ be a sequence of points of $M$. In all the following, we will suppose up to extracting a subsequence that, 
for $\alpha$ large enough, all the points $\xi_\alpha$ belong to a small open set $\Omega$ of $M$ in which there exists a smooth orthogonal frame. 
Thus, we will identify the tangent spaces $T_\xi M$ with $\R^n$ for all $\xi\in \Omega$. 
We recall that we suppose that $P_g$ is coercive. 

In all the following, we will denote by $\left\langle .,.\right\rangle_{P_g}$, the scalar product, for $u,v\in H^2(M)$,
$$\left\langle u,v\right\rangle_{P_g}= \int_M \Delta_g u \Delta_g v dV +\int_M A_g (\nabla_g u ,\nabla_g v)dV+\int_M h uv dV,$$
where here and in the following $dV$ stands for the volume element with respect to the metric $g$, and  $\left\|.\right\|_{P_g}$, for the associated norm which is then equivalent to the standard norm of $H^2(M)$.
We denote by $i^\ast : L^{\frac{2n}{n+4}}(M)\rightarrow H^2(M)$ the adjoint operator of the embedding $i: H^2(M)\rightarrow L^{\frac{2n}{n-4}}(M)$, 
i.e. for all $\varphi \in L^{\frac{2n}{n+4}}(M)$, the function $u=i^\ast (\varphi)\in H^2(M)$ is the unique solution 
of $\Delta^2_g u-div_g (A_g du) +h u =\varphi$. Using this notation, we see that equation \eqref{eq} can be rewritten as, for $u\in H^2(M)$,
$$u=i^\ast(f_\varepsilon (u)),$$
where $f_\varepsilon (u)=|u|^{2^\ast -2-\varepsilon}u$. Before proceeding we recall some basic facts. It is well known (see \cite{lin1998classification}) 
that all solutions $u\in H^2(\R^n)$ 
of the equation
$$\Delta^2_{eucl} u=u^{2^\ast-1}=u^{\frac{n+4}{n-4}}\ in\ \R^n$$
are given by
$$U_{\delta, y}(x)=\delta^{\frac{4-n}{2}}U(\frac{x-y}{\delta}),\ \delta >0,\ y\in \R^n$$
where 
\begin{equation}
\label{defalpha}
U(x)= [n(n-4)(n^2-4)]^{\frac{n-4}{8}} \left(\dfrac{1}{1+|x|^2}\right)^{\frac{n-4}{2}}=\alpha_n \left(\dfrac{1}{1+|x|^2}\right)^{\frac{n-4}{2}}.
\end{equation}
It is also well known (see \cite{MR1694339}) that all solutions $v\in H^2(\R^n)$  
of
$$\Delta^{2}_{eucl} v= (2^\ast -1)U^{2^\ast-2}v$$
are linear combinations of
$$V_0(x)= \alpha_n \dfrac{n-4}{2}\dfrac{|x|^2-1}{(1+|x|^2)^{\frac{n-2}{2}}}$$
and
$$V_i(x)= \alpha_n (n-4)\dfrac{x_i}{(1+|x|^2)^{\frac{n-2}{2}}},\ i=1,\ldots ,n. $$
Let $\chi : \R\rightarrow \R$ be a smooth cutoff function such that $0\leq \chi\leq 1$, $\chi(x)=1$ if $x\in [-\dfrac{r_0}{2},\dfrac{r_0}{2}]$ and $\chi(x)=0$ if $x\in\R \backslash (-r_0,r_0)$. We define, for any real $\delta$ strictly positive, $\xi \in M$ and $x\in M$,
$$W_{\delta,\xi}(x)=\chi (d_g(x,\xi))\delta^{\frac{4-n}{2}}U(\delta^{-1}\exp_\xi^{-1}(x)),$$
where $d_g(x,\xi)$ stands for the distance from $x$ to $\xi$ with respect to the metric $g$ and $\exp_\xi$ is the exponential map with respect to the metric $g$. We also define, for any real $\delta$ strictly positive, $\xi \in M$ and $x\in M$,
$$Z_{\delta,\xi}(x)=\chi (d_g(x,\xi))\delta^{\frac{n-4}{2}}\dfrac{d(x,\xi)^2 -\delta^2}{(\delta^2+d(x,\xi)^2)^{\frac{n-2}{2}}},$$
and, for $\omega \in T_\xi M$,
$$Z_{\delta,\xi,\omega}(x)=\chi (d_g(x,\xi))\delta^{\frac{n-2}{2}}\dfrac{\left\langle \exp_\xi^{-1}x,\omega\right\rangle_g}{(\delta^2+d(x,\xi)^2)^{\frac{n-2}{2}}}. $$
We denote by $\Pi_{\delta,\xi}$ respectively $\Pi_{\delta,\xi}^{\bot}$ the projection of $H^2(M)$ onto
$$K_{\delta,\xi}=\mathrm{span}\left\{Z_{\delta,\xi},(Z_{\delta,\xi,e_i})_{i=1..n}\right\}$$
respectively
\begin{equation}\label{nodegen}K_{\delta,\xi}^{\bot}=\left\{\phi \in H^2(M)\slash \left\langle \phi , Z_{\delta,\xi}\right\rangle_{P_g} =0\ and\ \left\langle \phi , Z_{\delta,\xi,\omega}\right\rangle_{P_g} =0,\ \forall \omega\in T_\xi M  \right\}.\end{equation}
We recall that a solution $u_0$ of \eqref{eq} is nondegenerate if the linearization of the equation has trivial kernel, that is
\begin{equation}
\label{nondegdef}
K=\left\{\varphi \in C^{4,\theta}(M) \slash P_g \varphi= (2^\ast -1)|u_0|^{2^\ast -2}\varphi  \right\}=\left\{0\right\}.
\end{equation}
We are looking for solution $u$ to \eqref{eq} of the form $$u=u_0-W_{\delta_\varepsilon(t_\varepsilon),\xi_\varepsilon}+\phi_{\delta_\varepsilon(t_\varepsilon),\xi_\varepsilon},$$
where $u_0$ is a nondegenerate positive solution of \eqref{eq}, $\phi_{\delta_\varepsilon(t_\varepsilon),\xi_\varepsilon}\in K_{\delta_\varepsilon(t_\varepsilon),\xi_\varepsilon}^{\bot}$ and 
\begin{equation}\label{delta}\delta_\varepsilon(t_\varepsilon)=\left\{\begin{array}{ll}\sqrt{t_\varepsilon\varepsilon}\ \mathrm{if}\ n\geq 8\\
(t_\varepsilon\varepsilon)^{\frac{2}{n-4}}\ \mathrm{if}\ 5\leq n\leq 8 \end{array}\right.,\ t_\varepsilon>0.\end{equation} It is easy to see that equation \eqref{eq} is equivalent to the following system
\begin{equation}
\label{eq1}
\Pi_{\delta_\varepsilon(t),\xi }(u_0 - W_{\delta_\varepsilon(t),\xi }+\phi_{\delta_\varepsilon(t),\xi } -i^\ast (f_\varepsilon(u_0-W_{\delta_\varepsilon(t),\xi }+\phi_{\delta_\varepsilon(t),\xi })))=0,
\end{equation}
and
\begin{equation}
\label{eq2}
\Pi_{\delta_\varepsilon (t),\xi }^\bot(u_0 - W_{\delta_\varepsilon(t),\xi }+\phi_{\delta_\varepsilon(t),\xi } -i^\ast (f_\varepsilon(u_0-W_{\delta_\varepsilon(t),\xi }+\phi_{\delta_\varepsilon(t),\xi })))=0.
\end{equation}
We begin by solving \eqref{eq2}.
\begin{prop}
\label{prop4.1}
Let $u_0\in C^{4,\theta}(M)$ be a nondegenerate positive solution of \eqref{eq}. Given two real numbers $a<b$, there exists a positive constant $C_{a,b}$ such that for $\varepsilon$ small, for any $t\in [a,b]$
and any $\xi \in M$, there exists a unique function $\phi_{\delta_\varepsilon (t),\xi} \in K_{\delta_{\varepsilon}(t), \xi}^\bot$
which solves equation \eqref{eq2} and satisfies
\begin{eqnarray}
\label{eqprop4.1}
\left\|\phi_{\delta_\varepsilon (t),\xi}\right\|_{P_g} \leq C_{a,b} \varepsilon |\ln\varepsilon |.
\end{eqnarray}
Moreover, $\phi_{\delta_\varepsilon (t),\xi}$ is continuously differentiable with respect to $t$ and $\xi$.
\end{prop}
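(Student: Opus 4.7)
The plan is to set up equation \eqref{eq2} as a fixed point problem in the Banach space $(K_{\delta_\varepsilon(t),\xi}^\bot, \|\cdot\|_{P_g})$ and solve it by the contraction mapping principle on a ball of radius of order $\varepsilon|\ln\varepsilon|$. Writing $W := W_{\delta_\varepsilon(t),\xi}$ and $\Pi^\bot := \Pi_{\delta_\varepsilon(t),\xi}^\bot$, I decompose the nonlinearity as
\[
f_\varepsilon(u_0 - W + \phi) = f_\varepsilon(u_0 - W) + f_\varepsilon'(u_0 - W)\phi + N_\varepsilon(\phi),
\]
where $N_\varepsilon(\phi)$ gathers the quadratic and higher order remainders. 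Equation \eqref{eq2} then rewrites as
\[
L_\varepsilon(\phi) = R_\varepsilon + \Pi^\bot i^\ast(N_\varepsilon(\phi)),
\]
with the linear operator $L_\varepsilon(\phi) := \Pi^\bot\bigl(\phi - i^\ast(f_\varepsilon'(u_0 - W)\phi)\bigr)$ and the error term $R_\varepsilon := \Pi^\bot\bigl(i^\ast(f_\varepsilon(u_0 - W)) - (u_0 - W)\bigr)$.

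The crucial step is to show that $L_\varepsilon : K_{\delta_\varepsilon(t),\xi}^\bot \to K_{\delta_\varepsilon(t),\xi}^\bot$ is invertible, with norm of its inverse bounded uniformly in $t \in [a,b]$, $\xi \in M$ and small $\varepsilon$. I would argue by contradiction, assuming sequences $\varepsilon_k \to 0$, $t_k \in [a,b]$, $\xi_k \in M$ and $\phi_k \in K_{\delta_{\varepsilon_k}(t_k),\xi_k}^\bot$ with $\|\phi_k\|_{P_g} = 1$ and $L_{\varepsilon_k}(\phi_k) \to 0$. Pulling back via $\exp_{\xi_k}$ and rescaling by $\delta_{\varepsilon_k}(t_k)$, the concentrated part of $\phi_k$ converges weakly to a solution $v \in H^2(\R^n)$ of $\Delta^2_{eucl} v = (2^\ast-1) U^{2^\ast-2} v$ which is orthogonal to $V_0, V_1, \dots, V_n$ by the orthogonality built into \eqref{nodegen}; by the classical nondegeneracy recalled in the introduction, $v = 0$. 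The remaining part of $\phi_k$ converges weakly in $H^2(M)$ to a solution of $P_g v = (2^\ast-1)u_0^{2^\ast-2} v$, which vanishes by the nondegeneracy hypothesis \eqref{nondegdef}. Upgrading the weak convergences to strong ones then contradicts $\|\phi_k\|_{P_g} = 1$.

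Once the uniform invertibility of $L_\varepsilon$ is established, I would combine it with the error estimate $\|R_\varepsilon\|_{P_g} \leq C_{a,b}\,\varepsilon|\ln\varepsilon|$ (proved in the appendix; this is where the choice \eqref{delta} of $\delta_\varepsilon(t)$ enters, being calibrated to balance the Euclidean leading order against the contributions from $A_g$, $h$, and $u_0$) and with the superquadratic Lipschitz bound $\|N_\varepsilon(\phi_1) - N_\varepsilon(\phi_2)\|_{L^{2n/(n+4)}} \leq o(1)\,\|\phi_1 - \phi_2\|_{P_g}$ on balls of small $\|\cdot\|_{P_g}$-radius. Together, these estimates make the map
\[
T_\varepsilon(\phi) := L_\varepsilon^{-1}\bigl(R_\varepsilon + \Pi^\bot i^\ast(N_\varepsilon(\phi))\bigr)
\]
a contraction from the ball $\{\phi \in K_{\delta_\varepsilon(t),\xi}^\bot : \|\phi\|_{P_g} \leq C_{a,b}\,\varepsilon|\ln\varepsilon|\}$ into itself for $\varepsilon$ small, and Banach's fixed point theorem produces the unique $\phi_{\delta_\varepsilon(t),\xi}$ satisfying \eqref{eqprop4.1}. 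The $C^1$ dependence on $(t,\xi)$ then follows from the implicit function theorem applied to the $C^1$ map $(\phi,t,\xi) \mapsto \phi - T_\varepsilon(\phi)$, whose partial differential in $\phi$ at the fixed point is uniformly invertible.

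The main technical obstacle I anticipate is the uniform invertibility of $L_\varepsilon$: one must simultaneously separate the bubble-scale and manifold-scale parts of $\phi_k$ without losing mass, and use both nondegeneracy statements, on $\R^n$ for $U$ and on $M$ for $u_0$, to kill the two weak limits. Once this is in hand, the rest is a routine contraction argument in the spirit of \cite{MR2859126, robert2012sign}, with the delicate but computational error estimate deferred to the appendix.
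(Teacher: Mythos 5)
Your proposal follows essentially the same route as the paper: the same decomposition of \eqref{eq2} into $L_\varepsilon(\phi) = N_\varepsilon(\phi) + R_\varepsilon$, the same contradiction/rescaling argument for the uniform invertibility of $L_\varepsilon$ (with both the manifold-scale weak limit killed by nondegeneracy of $u_0$ and the bubble-scale weak limit killed by the classification of solutions of $\Delta^2 v = (2^\ast-1)U^{2^\ast-2}v$ together with the orthogonality in $K^\bot_{\delta,\xi}$), and the same contraction-mapping step on a ball of radius $O(\varepsilon|\ln\varepsilon|)$ using the error bound from Lemma~\ref{reste}. The only detail you gloss over, which the paper treats explicitly, is that $L_\alpha(\phi_\alpha)\to 0$ does not directly give $\phi_\alpha - i^\ast(f'_{\varepsilon_\alpha}(u_0-W_\alpha)\phi_\alpha)\to 0$ because of the projection $\Pi^\bot$; one must first show the Lagrange-multiplier coefficients $\lambda_{i,\alpha}$ in \eqref{rob28} vanish, which the paper does by pairing against the $Z_{i,\alpha}$.
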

In order to prove the previous proposition, we set, for $\varepsilon$ small, for any positive real number $\delta$ and $\xi \in M$, the map $L_{\varepsilon , \delta,\xi}: K_{\delta, \varepsilon}^\bot \rightarrow K_{\delta, \varepsilon}^\bot$ 
defined by, for $\phi \in K_{\delta, \varepsilon}^\bot$,
$$L_{\varepsilon , \delta,\xi}(\phi)= \Pi_{\delta,\xi }^\bot(\phi - i^\ast (f^\prime_{\varepsilon}(u_0 - W_{\delta,\xi})\phi)).$$
We will first prove that this map is inversible for $\delta$ and $\varepsilon$ small.
\begin{lem}
\label{inver}
There exists a positive constant $C_{a,b}$ such that for $\varepsilon$ small, for any $t\in [a,b]$, any $\xi \in M$ and any $\phi \in K_{\delta, \varepsilon}^\bot$, we have
$$\left\|L_{\varepsilon_\alpha , \delta_{\varepsilon_\alpha}(t_\alpha),\xi_\alpha}(\phi)\right\|_{P_g} \geq C_{a,b}\left\|\phi\right\|_{P_g}.$$
\end{lem}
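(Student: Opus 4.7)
The plan is a contradiction argument standard in Lyapunov-Schmidt reductions. Assume, for a contradiction, that there exist sequences $\varepsilon_\alpha \to 0$, $t_\alpha \in [a,b]$ with $t_\alpha \to t_0$, $\xi_\alpha \to \xi_0 \in M$, and $\phi_\alpha \in K_{\delta_\alpha,\xi_\alpha}^\bot$, where $\delta_\alpha := \delta_{\varepsilon_\alpha}(t_\alpha) \to 0$, such that $\|\phi_\alpha\|_{P_g} = 1$ while $\|L_{\varepsilon_\alpha,\delta_\alpha,\xi_\alpha}(\phi_\alpha)\|_{P_g} \to 0$. Unpacking the definition of $L$, one obtains real coefficients $c_\alpha^0, c_\alpha^1, \ldots, c_\alpha^n$ and a function $h_\alpha \in K_{\delta_\alpha,\xi_\alpha}^\bot$ with $\|h_\alpha\|_{P_g} = o(1)$ such that
\begin{equation*}
P_g \phi_\alpha = (2^\ast - 1 - \varepsilon_\alpha)\,|u_0 - W_{\delta_\alpha,\xi_\alpha}|^{2^\ast - 2 - \varepsilon_\alpha}\,\phi_\alpha + P_g h_\alpha + c_\alpha^0 P_g Z_{\delta_\alpha,\xi_\alpha} + \sum_{i=1}^n c_\alpha^i P_g Z_{\delta_\alpha,\xi_\alpha,e_i}
\end{equation*}
in the dual sense. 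Pairing this identity with each $Z_{\delta_\alpha,\xi_\alpha}$ and $Z_{\delta_\alpha,\xi_\alpha,e_i}$, using the near-orthogonality of these elements in $\langle\cdot,\cdot\rangle_{P_g}$ together with $\langle\phi_\alpha, Z\rangle_{P_g} = 0$, forces every $c_\alpha^j$ to vanish in the limit.

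After extraction, $\phi_\alpha \rightharpoonup \phi_\infty$ weakly in $H^2(M)$. Since $W_{\delta_\alpha,\xi_\alpha} \to 0$ strongly in $L^p(M)$ for every $p < 2^\ast$ while $u_0$ is bounded, testing against any fixed $C^\infty$ function gives $P_g \phi_\infty = (2^\ast - 1)\,u_0^{2^\ast - 2}\,\phi_\infty$ on $M$, and the nondegeneracy hypothesis \eqref{nondegdef} yields $\phi_\infty \equiv 0$.

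To control the concentrating part I would rescale by setting $\tilde\phi_\alpha(y) := \delta_\alpha^{(n-4)/2}\phi_\alpha(\exp_{\xi_\alpha}(\delta_\alpha y))$, extended by zero outside $B(0, r_0/\delta_\alpha)$. A conformal-type change of variables shows that $\tilde\phi_\alpha$ is bounded in the natural energy space $\{v : \Delta v \in L^2(\R^n)\}$, and its weak limit $\tilde\phi_\infty$ satisfies
\begin{equation*}
\Delta^2_{eucl}\tilde\phi_\infty = (2^\ast - 1)\,U^{2^\ast - 2}\,\tilde\phi_\infty \quad\text{on } \R^n,
\end{equation*}
using that the rescaled bubble converges locally uniformly to $U$, that the rescaled contribution of $u_0$ vanishes, and that the lower-order terms in $P_g$ scale away. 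By the classification recalled just after \eqref{defalpha}, $\tilde\phi_\infty$ is a linear combination of $V_0, V_1, \ldots, V_n$. Rescaling the orthogonality conditions that define $K_{\delta_\alpha,\xi_\alpha}^\bot$ (the functions $Z_{\delta_\alpha,\xi_\alpha}$ and $Z_{\delta_\alpha,\xi_\alpha,\omega}$ are tailored so that after rescaling they converge to $V_0$ and the $V_i$) produces in the limit the Euclidean orthogonality of $\tilde\phi_\infty$ to each $V_j$, hence $\tilde\phi_\infty \equiv 0$.

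To close the contradiction, test the displayed identity with $\phi_\alpha$ itself to obtain
\begin{equation*}
1 = \|\phi_\alpha\|_{P_g}^2 = o(1) + (2^\ast - 1 - \varepsilon_\alpha)\int_M |u_0 - W_{\delta_\alpha,\xi_\alpha}|^{2^\ast - 2 - \varepsilon_\alpha}\phi_\alpha^2\,dV.
\end{equation*}
Splitting the integral at radius $R\delta_\alpha$ around $\xi_\alpha$, the inner piece is controlled, after the change of variables, by $\int_{B(0,R)} U^{2^\ast - 2}\tilde\phi_\alpha^2\,dy$, which vanishes because $\tilde\phi_\alpha \to 0$ in $L^2_{\mathrm{loc}}(\R^n)$, while the outer piece is dominated by $\int_{M} u_0^{2^\ast - 2}\phi_\alpha^2\,dV$ up to a remainder tending to zero uniformly in $R$; this also vanishes because $\phi_\infty = 0$ and $\phi_\alpha \to 0$ strongly in $L^{2^\ast - \eta}(M)$ for any $\eta > 0$. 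Sending $R \to \infty$ yields $1 = o(1)$, the desired contradiction. The main technical obstacle will be the blow-up step: one must expand $|u_0 - W_{\delta_\alpha,\xi_\alpha}|^{2^\ast - 2 - \varepsilon_\alpha}$ with enough precision to separate cleanly the contribution captured by the Euclidean rescaling from the one contributing to $\phi_\infty$, and this expansion is dimension-sensitive because $2^\ast - 2 = 8/(n-4)$ is small only when $n$ is large, so in low dimensions the cross terms require Hölder-type interpolation rather than a simple Taylor expansion.
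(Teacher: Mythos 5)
Your proposal follows essentially the same contradiction-and-rescaling scheme as the paper: decompose $L_\alpha(\phi_\alpha)$ using the $Z$-frame and show the coefficients vanish, pass to a macroscopic weak limit killed by nondegeneracy, pass to a microscopic rescaled limit killed by the classification of the linearized equation in $\R^n$ plus the orthogonality built into $K^\bot_{\delta,\xi}$, and then close by testing with $\phi_\alpha$ itself. Two small remarks. First, your statement that near-orthogonality plus $\langle\phi_\alpha,Z\rangle_{P_g}=0$ alone ``forces every $c^j_\alpha$ to vanish'' is slightly elliptical: pairing the identity with $Z_{i,\alpha}$ leaves the term $\int_M f'_{\varepsilon_\alpha}(u_0-W_\alpha)\phi_\alpha Z_{i,\alpha}\,dV$, whose smallness is not automatic and is precisely what requires the rescaling argument you introduce a step later (the paper gets it from its display $(2.11)$--$(2.12)$, where the rescaled $K^\bot$ condition plus $\Delta^2_{eucl}V_i=(2^\ast-1)U^{2^\ast-2}V_i$ give $\int U^{2^\ast-2}V_i\tilde\phi=0$). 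Second, you are actually more careful than the paper on one point: you place the weak limit $\tilde\phi_\infty$ in the space $\{v:\Delta v\in L^2(\R^n)\}$ rather than $H^2(\R^n)$, which is the correct energy space since only the Hessian seminorm is scale invariant. Your closing concern about dimension-sensitivity of the expansion of $|u_0-W|^{2^\ast-2-\varepsilon}$ is reasonable but turns out to be a non-issue here: the crude bound $|u_0-W|^{2^\ast-2-\varepsilon}\leq C(1+|W|^{2^\ast-2-\varepsilon})$ together with $\phi_\alpha\to 0$ strongly in $L^2(M)$ (Rellich) for the outer part and $\tilde\phi_\alpha\rightharpoonup 0$ with the decay of $U^{2^\ast-2}$ for the inner part suffices in every dimension $n\geq 5$.
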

\begin{proof}
Assume by contradiction that there exist two sequences of positive real numbers $(\varepsilon_\alpha)_\alpha$ and $(t_\alpha)_\alpha$ such that $\varepsilon_\alpha\underset{\alpha\rightarrow +\infty}{\longrightarrow} 0$
and $a\leq t_\alpha \leq b$, a sequence of points $(\xi_\alpha))_\alpha$ of $M$ and a sequence of functions $(\phi_\alpha)_\alpha$ such that
\begin{equation}
\label{rob25}
\phi_\alpha \in  K_{\delta_{\varepsilon_\alpha}(t_\alpha), \xi_\alpha}^\bot,\ \left\|\phi_\alpha\right\|_{P_g}=1\ \mbox{ and}\ \left\|L_{\varepsilon_\alpha , \delta_{\varepsilon_\alpha}(t_\alpha),\xi_\alpha}(\phi_\alpha)\right\|_{P_g}\underset{\alpha \rightarrow \infty}{\longrightarrow} 0.
\end{equation}
To simplify notations, we set $L_\alpha = L_{\varepsilon_\alpha , \delta_{\varepsilon_\alpha}(t_\alpha),\xi_\alpha}$, $W_\alpha=W_{\delta_{\varepsilon_\alpha}(t_\alpha),\xi_\alpha}$, $Z_{0,\alpha}=Z_{\delta_{\varepsilon_\alpha}(t_\alpha),\xi_\alpha}$ and $Z_{i,\alpha}= Z_{\delta_{\varepsilon_\alpha}(t_\alpha),\xi_\alpha,e_i}$ for $i=1,\ldots ,n$
where $e_i$ is the $i$-th vector in the canonical basis of $\R^n$. 
By definition of $L_\alpha$, there exist real numbers $\lambda_{i,\alpha}$, $i=0,\ldots ,n$ such that
\begin{equation}
\label{rob28}
\phi_\alpha - i^\ast (f^\prime_{\varepsilon_\alpha}(u_0 - W_\alpha)\phi_\alpha )-L_\alpha(\phi_\alpha)=\sum_{i=0}^n \lambda_{i,\alpha} Z_{i,\alpha}. 
\end{equation}
Standard computations give
\begin{equation}
\label{ortho}
\left\langle Z_{i,\alpha}, Z_{j,\alpha}\right\rangle_{P_g} \underset{\alpha \rightarrow \infty}{\longrightarrow} \left\|\Delta_{eucl} V_i\right\|_{L^{2}(\R^n)}^2 \delta_{ij},
\end{equation}
where $\delta_{ij}$ stands for the Kronecker symbol. Therefore, taking the scalar product of \eqref{rob28} with $Z_{i,\alpha}$, using the previous limit and recalling that $\phi_\alpha$ and $L_\alpha(\phi_\alpha)$ belong to $K_{\delta_{\varepsilon_\alpha}(t_\alpha), \xi_\alpha}^\bot$, we deduce that 
\begin{equation}
\label{rob30}
\int_M f^\prime_{\varepsilon_\alpha}(u_0 - W_\alpha)\phi_\alpha Z_{i,\alpha}dV=-\lambda_{i,\alpha}\left\|\Delta_{eucl} V_i\right\|_{L^{2}(\R^n)}^2+\left(\sum_{i=0}^n |\lambda_{i,\alpha}|\right) o(1), 
\end{equation}
where, here and in the following, $o(1)\underset{\alpha \rightarrow +\infty}{\longrightarrow } 0$. It is easy to see using the definition of $W_\alpha$ and $Z_{i,\alpha}$ and a change of variables that, for $\alpha$ large enough,
\begin{eqnarray}
\label{rob35}
&&\int_M f^\prime_{\varepsilon_\alpha}(u_0 - W_\alpha)\phi_\alpha Z_{i,\alpha}dV\nonumber \\
&=&\int_M f^\prime_{\varepsilon_\alpha}( W_\alpha)\phi_\alpha Z_{i,\alpha}dV+o(1)\\
&=& (2^\ast -1-\varepsilon_\alpha)\delta_{\varepsilon_{\alpha}}(t_\alpha)^{\varepsilon_\alpha\frac{n+4}{2}}\int_{\R^n} \chi_\alpha^{2^\ast -2-\varepsilon_\alpha} U^{2^\ast -2-\varepsilon_\alpha}V_i \tilde{\phi}_\alpha dV_{\tilde{g}_\alpha}+o(1),\nonumber
\end{eqnarray}
where $\chi_\alpha= \chi (\delta_{\varepsilon_\alpha}(t_\alpha)|x|)$, $\tilde{\phi}_\alpha (x) =\delta_{\varepsilon_\alpha}(t_\alpha)^{\frac{n-4}{2}}\chi_\alpha \phi_\alpha (\exp_{\xi_\alpha}(\delta_{\varepsilon_\alpha}(t_\alpha)x )) $ 
and $\tilde{g}_\alpha (x)= \exp^\ast_{\xi_\alpha} g (\delta_{\varepsilon_\alpha}(t_\alpha)x)$.
Since $(\phi_\alpha)_\alpha$ is bounded in $H^2(M)$, passing to a subsequence if necessary,
we can assume that $(\tilde{\phi}_\alpha)_\alpha$ converges weakly to a function $\tilde{\phi}\in H^2(\R^n)$.
Letting $\alpha\rightarrow +\infty$ in \eqref{rob35}, we deduce that
\begin{equation}
\label{rob36}
\int_M f^\prime_{\varepsilon_\alpha}(u_0 - W_\alpha)\phi_\alpha Z_{i,\alpha}dV\underset{\alpha \rightarrow \infty}{\longrightarrow}(2^\ast -1) \int_{\R^n} U^{2^\ast -2}V_i \tilde{\phi}dV_{g_{eucl}}=0,
\end{equation}
where we used that $V_i$ is solution of $\Delta^{2}_{eucl} V_i= \dfrac{n+4}{n-4}U^{2^\ast -2}V_i$ in $\R^n$ and $\phi_\alpha \in  K_{\delta_{\varepsilon_\alpha}(t_\alpha), \xi_\alpha}^\bot$ to obtain the last equality. Therefore, from \eqref{rob30} and \eqref{rob36}, we have
$$\lambda_{i,\alpha}=o(1)+o(\sum_{i=0}^n |\lambda_{i,\alpha}|).$$
From \eqref{rob28}, this implies
$$\phi_\alpha - i^\ast (f^\prime_{\varepsilon_\alpha}(u_0 - W_\alpha)\phi_\alpha )-L_\alpha(\phi_\alpha)\underset{\alpha \rightarrow \infty}{\longrightarrow}0.$$
Since by assumption $\left\|L_{\varepsilon_\alpha , \delta_{\varepsilon_\alpha}(t_\alpha),\xi_\alpha}(\phi_\alpha)\right\|_{P_g}\underset{\alpha \rightarrow \infty}{\longrightarrow} 0$, we finally obtain that
\begin{equation}
\label{rob26}
\left\|\phi_\alpha  - i^\ast (f^\prime_{\varepsilon_\alpha}(u_0 - W_\alpha)\phi_\alpha )\right\|_{P_g}\underset{\alpha \rightarrow \infty}{\longrightarrow}0. 
\end{equation}
Since $(\phi_\alpha)_\alpha$ is bounded in $H^2(M)$, up to taking a subsequence, we can assume that $\phi_\alpha$ converges weakly in $H^2(M)$ to a function $\phi \in H^2(M)$. Then, using \eqref{rob26}, we get, for any $\varphi\in H^2(M)$,
\begin{align}
\label{rob37}
\left|\left\langle \varphi ,\phi_\alpha\right\rangle_{P_g} -\int_M f_{\varepsilon_\alpha}^\prime (u_0-W_\alpha) \varphi \phi_\alpha dV\right|&
=
{}
\left|\left\langle \varphi ,\phi_\alpha-i^\ast(f_{\varepsilon_{\alpha}}^\prime(u_0-W_\alpha)\phi_\alpha)\right\rangle_{P_g} \right|\nonumber\\
&\leq {}
\left\|\varphi \right\|_{P_g} \left\|\phi_\alpha-i^\ast(f_{\varepsilon_{\alpha}}^\prime(u_0-W_\alpha)\phi_\alpha)\right\|_{P_g}\nonumber\\
&=
o( \left\|\varphi \right\|_{P_g}).
\end{align}
We deduce from this that $\phi$ is a weak solution of $P_g \phi= (2^\ast -1) u_0^{2^\ast-2}\phi$. Since $u_0$ is a nondegenerate solution of \eqref{eq}, we obtain that $\phi=0$. 
Therefore, $\phi_\alpha\underset{\alpha \rightarrow \infty}{\rightharpoonup}0$ weakly in $H^2(M)$. 
Now we will show that $\tilde{\phi}_\alpha \underset{\alpha \rightarrow \infty}{\rightharpoonup}0$ weakly in $H^{2}(\R^n)$.
Let $\tilde{\varphi}$ be a smooth function with compact support in $\R^n$, we will use \eqref{rob37} with, for $x\in M$, 
$$\varphi (x)=\chi (d_{g_{\xi_\alpha}}(x,\xi_\alpha))\delta_{\varepsilon_\alpha}(t_\alpha)^{\frac{4-n}{2}}\tilde{\varphi} (\delta_{\varepsilon_\alpha}(t_\alpha)^{-1} \exp_{\xi_\alpha}^{-1}(x)).$$
Thus, applying \eqref{rob37} to the previous $\varphi$ and using a change of variable, we have,
\begin{eqnarray}
\label{rob38}
&&\int_{\R^n} \Delta_{\tilde{g}_\alpha} \tilde{\phi}_\alpha \Delta_{\tilde{g}_\alpha} \tilde{\varphi} dV_{\tilde{g}_\alpha}+\delta_{\varepsilon_\alpha}(t_\alpha)^{2}\int_{\R^n} A_{\tilde{g}_\alpha} (\nabla_{\tilde{g}_\alpha}\tilde{\phi}_\alpha,\nabla_{\tilde{g}_\alpha} \tilde{\varphi})dV_{\tilde{g}_\alpha}\nonumber\\
&+&\delta_{\varepsilon_\alpha}(t_\alpha)^{4}\int_{\R^n}h(\exp_{\xi_\alpha}(\delta_{\varepsilon_\alpha}(t_\alpha)x))\tilde{\phi}_\alpha \tilde{\varphi} dV_{\tilde{g}_\alpha}\\
&=& \delta_{\varepsilon_\alpha}(t_\alpha)^{4}\int_{\R^n} f_{\varepsilon_\alpha}^\prime (u_{0,\alpha} -W_\alpha(\exp_{\xi_\alpha} (\delta_{\varepsilon_\alpha}(t_\alpha)x)))\tilde{\phi}_\alpha \varphi dV_{\tilde{g}_\alpha}+o(1),\nonumber
\end{eqnarray}
where $u_{0,\alpha}(.)=u_0(\exp_{\xi_\alpha}(\delta_{\varepsilon_\alpha}(t_\alpha).))$. Now it is easy to see that, letting $\alpha\rightarrow \infty$ in \eqref{rob38}, 
 $$\int_{\R^n} \Delta_{eucl} \tilde{\phi} \Delta_{eucl} \tilde{\varphi} dV_{g_{eucl}} = (2^\ast-1) \int_{\R^n}U^{2^\ast -2}\tilde{\phi}\tilde{\varphi} dV_{g_{eucl}}.$$
 Thus $\tilde{\phi}$ is a weak solution of $\Delta^{2}_{eucl} \tilde{\phi}= \dfrac{n+4}{n-4}U^{2^\ast -2}\tilde{\phi}$. So, from \cite{MR1694339}, 
 we know that there exists $\lambda_i\in \R$, $i=0,\ldots ,n$, such that $\tilde{\phi}=\sum_{i=0}^n \lambda_i V_i$.
 Since $\phi_\alpha \in K_{\delta_{\varepsilon_\alpha}(t_\alpha), \xi_\alpha}^\bot$, using the same argument as in \eqref{rob36}, 
 we deduce that $\tilde{\phi}\equiv 0$. 
 Using one more time \eqref{rob37} with $\varphi=\phi_\alpha$,
 a change of variables and since $\phi_\alpha\underset{\alpha \rightarrow \infty}{\rightharpoonup}0$
 weakly in $H^2(M)$ and $\tilde{\phi}_\alpha \underset{\alpha \rightarrow \infty}{\rightharpoonup}0$ 
 weakly in $H^{2}(\R^n)$, we get
\begin{eqnarray*}
\left\|\phi_\alpha\right\|_{P_g}^2 &=&(2^\ast -1-\varepsilon_\alpha) \int_M |u_0 - W_\alpha|^{2^\ast -2-\varepsilon_\alpha} \phi_\alpha^2 dV+o(1)\\
&\leq & C \int_M \phi_\alpha^2 dV +C \int_M |W_\alpha|^{2^\ast -2-\varepsilon_\alpha}\phi_\alpha^2dV+o(1)\\
&\leq & C \int_M \phi_\alpha^2 dV +C \int_M |U|^{2^\ast -2-\varepsilon_\alpha}\tilde{\phi}_\alpha^2dV_{\tilde{g}_\alpha}+o(1)\underset{\alpha\rightarrow \infty}{\longrightarrow}0.
\end{eqnarray*}
This yields to a contradiction with \eqref{rob25}.

\end{proof}

\begin{proof}[Proof of Proposition \ref{prop4.1}.]
It is easy to see that equation \eqref{eq2} is equivalent to
$$L_{\varepsilon , \delta_\varepsilon (t),\xi}(\phi)=N_{\varepsilon , \delta_\varepsilon (t),\xi}(\phi)+R_{\varepsilon , \delta_\varepsilon (t),\xi},$$
where
\begin{equation*}
 \begin{multlined}N_{\varepsilon , \delta_\varepsilon (t),\xi}(\phi)=\Pi_{\delta_\varepsilon(t),\xi}^{\bot}(i^\ast (f_\varepsilon (u_0 -W_{\delta_\varepsilon (t),\xi}+\phi))
 -f_\varepsilon(u_0-W_{\delta_\varepsilon (t),\xi})\\-f_\varepsilon^\prime (u_0-W_{\delta_\varepsilon (t),\xi})\phi),
 \end{multlined}\end{equation*}

and
$$R_{\varepsilon, \delta_\varepsilon (t),\xi}= \Pi_{\delta_\varepsilon(t),\xi}^{\bot}(i_\varepsilon^\ast (f_\varepsilon (u_0 -W_{\delta_\varepsilon (t),\xi}))-u_0+W_{\delta_\varepsilon (t),\xi}).$$
Let $T_{\varepsilon , \delta_{\varepsilon}(t),\xi}: K_{\delta_{\varepsilon_\alpha}(t_\alpha), \xi_\alpha}^\bot\rightarrow K_{\delta_{\varepsilon_\alpha}(t_\alpha), \xi_\alpha}^\bot$ be the application defined by
$$T_{\varepsilon , \delta_{\varepsilon}(t),\xi}(\phi)=L_{\varepsilon , \delta_\varepsilon (t),\xi}^{-1}(N_{\varepsilon , \delta_\varepsilon (t),\xi}(\phi)+R_{\varepsilon , \delta_\varepsilon (t),\xi} ),$$
and
$$B_{\varepsilon , \delta_\varepsilon (t),\xi}(\gamma)=\left\{\phi \in K_{\delta_{\varepsilon_\alpha}(t_\alpha), \xi_\alpha}^\bot |  \left\|\phi\right\|_{P_g}\leq \gamma\left\|R_{\varepsilon , \delta_\varepsilon (t),\xi} \right\|_{P_g} \right\},$$
where $\gamma$ is a positive constant which will be chosen later in order to apply the fixed point theorem for $T_{\varepsilon , \delta_{\varepsilon}(t),\xi}$ restricted to $B_{\varepsilon , \delta_\varepsilon (t),\xi}(\gamma)$. Since, from Lemma \ref{inver}, the map $L_{\varepsilon , \delta_\varepsilon (t),\xi}$ is inversible and has a continuous inverse, we have
\begin{equation}
\label{dp38}
\left\|T_{\varepsilon , \delta_\varepsilon (t),\xi}(\phi)\right\|_{P_g}\leq C ( \left\|N_{\varepsilon , \delta_\varepsilon (t),\xi}(\phi)\right\|_{P_g}+\left\|R_{\varepsilon , \delta_\varepsilon (t),\xi}\right\|_{P_g}), 
\end{equation}
and
\begin{equation}
\label{dp39}
 \left\|T_{\varepsilon , \delta_{\varepsilon}(t),\xi}(\phi_1)-T_{\varepsilon , \delta_{\varepsilon}(t),\xi}(\phi_2)\right\|_{P_g}\leq C \left\|N_{\varepsilon , \delta_\varepsilon (t),\xi}(\phi_1)-N_{\varepsilon , \delta_\varepsilon (t),\xi}(\phi_2) \right\|_{P_g}.
 \end{equation}
Since $i^\ast: L^{\frac{2n}{n+4}}(M)\rightarrow H^2(M)$ is continuous, we get
\begin{equation*}\begin{multlined}\left\|N_{\varepsilon , \delta_\varepsilon (t),\xi}(\phi) \right\|_{P_g}\leq 
\\
C\left\|f_\varepsilon
(u_0 -W_{\delta_\varepsilon (t),\xi}+\phi))-f_\varepsilon(u_0-W_{\delta_\varepsilon (t),\xi})-f_\varepsilon^\prime 
(u_0-W_{\delta_\varepsilon (t),\xi})\phi \right\|_{L^{\frac{2n}{n+4}}},\end{multlined}\end{equation*}
where, here and in the following, $\left\|.\right\|_{L^p}=\left\|.\right\|_{L^p(M)}$, $p\in \R^+$. Using the mean value theorem, H\"older and Sobolev inequalities, we have, for $\tau \in (0,1)$,
\begin{align}
\left\|N_{\varepsilon , \delta_\varepsilon (t),\xi}(\phi) \right\|_{P_g} & \leq 
  C \left\|\left[f_\varepsilon^\prime (u_0-W_{\delta_\varepsilon (t),\xi} +\tau \phi) -f_\varepsilon^\prime (u_0-W_{\delta_\varepsilon (t),\xi})\right] (\phi)\right\|_{L^{\frac{2n}{n+4}} \!}\nonumber\\
\leq
& C \left\|f_\varepsilon^\prime (u_0-W_{\delta_\varepsilon (t),\xi} +\tau \phi) -f_\varepsilon^\prime (u_0-W_{\delta_\varepsilon (t),\xi})\right\|_{L^{\frac{n}{4}}} \left\|\phi\right\|_{L^{2^\ast}}\!.\nonumber
\end{align}
We will use here and through the paper the following easy consequences of Taylor's expansion \cite[lemma 2.2]{Li},
for all $\alpha >0$, $\beta\in \R$,
\begin{equation}
\label{dp1}
||\alpha+\beta|^\theta - \alpha^\theta|\leq  \left\{\begin{array}{ll}C_\theta \min \left\{|\beta|^\theta , \alpha^{\theta-1}|\beta|\right\}\ \mathrm{if}\ 0<\theta \leq 1,\\
C_\theta (\alpha^{\theta-1}|\beta|+|\beta|^\theta)\ \mathrm{if}\ \theta >1,
\end{array}
\right.
\end{equation}
and 
\begin{equation}\label{dp2}
 ||\alpha+\beta|^\theta(\alpha+\beta) - \alpha^{\theta+1}-(1+\theta)\alpha^\theta\beta|\leq  \left\{
 \begin{array}{ll}C_\theta\mathrm{min}\left\{|\beta|^{\theta+1} , \alpha^{\theta-1}|\beta|^2\right\}\ \mathrm{if}\ \theta< 1,\\
C_\theta \mathrm{max}\{|\beta|^{\theta+1}, \alpha^{\theta-1}|\beta|^2\}\ \mathrm{if}\ \theta \geq 1.
\end{array}
\right.
\end{equation}
Thus, we obtain
\begin{equation}
\label{dp42}
\left\|N_{\varepsilon , \delta_\varepsilon (t),\xi}(\phi)\right\|_{P_g}\leq  \left\{\begin{array}{ll} C \left\|\phi\right\|_{P_g}^{2^\ast -1-\varepsilon}\ \mathrm{if}\ n\geq 12, \\
C (\left\|u_0-W\right\|^{2^\ast -3-\varepsilon}_{L^{2^\ast}} \left\|\phi\right\|_{P_g}^{2}+ \left\|\phi\right\|_{P_g}^{2^\ast -1-\varepsilon})\ \mathrm{if}\ 5\leq n <12 .\end{array}
\right.
\end{equation}
\noindent From the mean value theorem, H\"older and Sobolev inequalities, and \eqref{dp1}, we also get, for some $\tau \in (0,1)$,
\begin{align}
\label{dp43}
 &\!\!\!\!\!\left\|N_{\varepsilon , \delta_\varepsilon (t),\xi}(\phi_1)-N_{\varepsilon , \delta_\varepsilon (t),\xi}(\phi_2) \right\|_{P_g}\\
&\ \ \quad \ \ \quad\leq C\left\|  f_\varepsilon(u_0-W_{\delta_\varepsilon (t),\xi}+\phi_1)-f_\varepsilon(u_0-W_{\delta_\varepsilon (t),\xi}+\phi_2)\right.\nonumber\\
&\ \ \ \ \quad \ \ \quad \ \ \quad \ \ \quad \ \ \quad \ \ \quad \ \ \quad \ \ \quad\ \ \quad \ -\left.f_\varepsilon^\prime (u_0-W_{\delta_\varepsilon (t),\xi}) (\phi_1-\phi_2)\right\|_{L^{\frac{2n}{n+4}}}\nonumber\\
&\ \ \quad \ \ \quad\leq  C \left\|\left[f_\varepsilon^\prime (u_0-W_{\delta_\varepsilon (t),\xi} +\tau \phi_2+(1-\tau)\phi_1)\right.\right. \nonumber\\
 &\ \ \ \quad \ \ \quad \ \ \quad \ \ \quad \ \ \quad \ \ \quad \ \ \quad \ \ \quad\ \ \quad \               \left.\left.    -f_\varepsilon^\prime (u_0-W_{\delta_\varepsilon (t),\xi})\right] (\phi_1-\phi_2)\right\|_{L^{\frac{2n}{n+4}}}\nonumber\\
&\ \ \quad \ \ \quad\leq  C \left\|f_\varepsilon^\prime (u_0-W_{\delta_\varepsilon (t),\xi} +\tau \phi_2+(1-\tau)\phi_1) -f_\varepsilon^\prime (u_0-W_{\delta_\varepsilon (t),\xi})\right\|_{L^{\frac{n}{4}}}\nonumber\\ 
&\ \ \quad \ \ \ \ \quad \ \ \quad \ \ \quad \ \ \quad \ \ \quad \ \ \quad \ \ \quad \ \ \quad\ \ \quad \ \ \quad \ \ \quad\ \ \quad \  \ \times\left\|\phi_1-\phi_2\right\|_{L^{2^\ast}}\nonumber\\
 &\ \ \quad \ \ \quad\leq  \left\{\begin{array}{l}
 C (\left\|\phi_1\right\|_{P_g}^{2^\ast -2-\varepsilon}+\left\|\phi_2\right\|_{P_g}^{2^\ast -2-\varepsilon})  \left\|\phi_1-\phi_2\right\|_{P_g}\ \ \ \  \ \ \ \ \ \ \mathrm{if}\ n\geq 12,\\
 C(\left\|u_0- W_{\delta_\varepsilon (t),\xi}\right\|_{L^{2^\ast}(M)}+\left\|\phi_1\right\|_{P_g}+\left\|\phi_2\right\|_{P_g} )^{2^\ast -3-\varepsilon}\!\!\!\!\!\!\!\!\!\!\!\!\!\!\!\!\!\!\!\!\!\!\!\! \\
 \ \ \ \ \ \ \ \  \ \ \ \ \ \ \ \times (\left\|\phi_1\right\|_{P_g}+\left\|\phi_2\right\|_{P_g})\left\|\phi_1-\phi_2\right\|_{P_g}\ \ \ \ \mathrm{if}\ 5\leq n< 12 
 \end{array}
\right.
 \nonumber
\end{align}
Since $\left\|u_0- W_{\delta_\varepsilon (t),\xi}\right\|_{L^{2^\ast}}=O(1)$, it follows from \eqref{dp38}, \eqref{dp39}, \eqref{dp42} and \eqref{dp43}, that, for all $\phi,\ \phi_1,\ \phi_2\in B_{\varepsilon , \delta_\varepsilon (t),\xi}(\gamma)$,

$$ \left\|T_{\varepsilon , \delta_{\varepsilon}(t),\xi}(\phi)\right\|_{P_g}\leq  \left\{\begin{array}{lll} C(\gamma^{2^\ast-1-\varepsilon}\left\|R_{\varepsilon , \delta_\varepsilon (t),\xi} \right\|_{P_g}^{2^\ast-1-\varepsilon}+\left\|R_{\varepsilon , \delta_\varepsilon (t),\xi} \right\|_{P_g} )\ \mathrm{if}\ n\geq 12 \\
 C(\gamma^{2}\left\|R_{\varepsilon , \delta_\varepsilon (t),\xi} \right\|_{P_g}^{2}+\gamma^{2^\ast-1-\varepsilon}\left\|R_{\varepsilon , \delta_\varepsilon (t),\xi} \right\|_{P_g}^{2^\ast-1-\varepsilon} \\ \quad\quad\quad +\left\|R_{\varepsilon , \delta_\varepsilon (t),\xi} \right\|_{P_g} ) \ \ 
\quad\quad\quad\quad\quad  \mathrm{if}\ 5\leq n< 12 \end{array}
\right.
$$
and
$$\left\|T_{\varepsilon , \delta_{\varepsilon}(t),\xi}(\phi_1)-T_{\varepsilon , \delta_{\varepsilon}(t),\xi}(\phi_2)\right\|_{P_g}\leq C\gamma^{2^\ast-2-\varepsilon}\left\|R_{\varepsilon , \delta_\varepsilon (t),\xi} \right\|_{P_g}^{2^\ast-2-\varepsilon}\left\|\phi_1-\phi_2\right\|_{P_g} ,$$
where $C$ stands for positive constants not depending on $\gamma,\ \varepsilon,\ \xi,\ t,\ \phi,\ \phi_1$ and $\phi_2$. Thus from Lemma \ref{reste}, if $\gamma$ is fixed large enough, for $\varepsilon$ small, for any $t\in [a,b]$ and any $\xi \in M$, $T_{\varepsilon , \delta_{\varepsilon}(t),\xi}$ is a contraction mapping from $ B_{\varepsilon , \delta_\varepsilon (t),\xi}(\gamma)$ onto $ B_{\varepsilon , \delta_\varepsilon (t),\xi}(\gamma)$. Therefore, using the fixed point theorem, there exists a function $\phi_{\delta_\varepsilon (t),\xi} \in K_{\delta_{\varepsilon}(t), \xi}^\bot$ which solves equation \eqref{eq2}. Now, \eqref{eqprop4.1} follows from Lemma \ref{reste}. The fact that $\phi_{\delta_\varepsilon (t),\xi}$ is continuously differentiable with respect to $t$ and $\xi$ is standard.
\end{proof}

\section{The reduced problem.}
For $\varepsilon>0$ small enough, we defined the energy associated to \eqref{eq} by, for $u\in H^2(M)$,
$$J_\varepsilon (u)=\dfrac{1}{2}\int_M (\Delta_g u)^2 +\dfrac{1}{2}\int_M A_g(\nabla_g u,\nabla_g u)dV + \dfrac{1}{2}\int_M h u^2 dV - \int_M F_\varepsilon(u)dV,$$
where $F_\varepsilon(u)=\displaystyle\int_0^u f_\varepsilon(s)ds$. We set $I_\varepsilon(t,\xi)=J_\varepsilon (u_0-W_{\delta_\varepsilon(t),\xi}+\phi_{\delta_\varepsilon(t),\xi})$, $t\in \R^\ast_+$ and $\xi\in M$ where $\phi_{\delta_\varepsilon(t),\xi}\in K_{\delta_{\varepsilon}(t), \xi}^\bot$ is the function defined in Proposition \ref{prop4.1}. In the next proposition, we give the expansion of $I_\varepsilon$ with respect to $\varepsilon$.
\begin{prop}
\label{energiered}
Let $u_0\in C^{4,\theta}(M)$, $\theta\in (0,1)$ be a nondegenerate positive solution of \eqref{eq}. Then there exist constants $c_i(n,u_0)$, $i=2,5$ depending on $n$ and $u_0$ and $c_i(n)$, $i=1,3,4$, depending on $n$ such that
\begin{equation}
\label{fin}
I_\varepsilon(t,\xi)=c_5(n,u_0)+c_2(n,u_0)\varepsilon+c_3(n) \varepsilon \ln \varepsilon -c_4(n) \varepsilon \ln (t)+c_1(n)\varphi(\xi) \varepsilon t +o(\varepsilon)
\end{equation}
as $\varepsilon \rightarrow 0$ $C^0$ uniformly with respect to $t$ in compact subsets of $\R^\ast_+$ and with respect to $\xi \in M$ and $C^1$ uniformly if $8\leq n\leq 13$. Moreover, we have that $c_4(n)> 0$,
$c_1(n)=\dfrac{2}{n}K_n^{-\frac{n}{4}}$ and 
\begin{equation*}\begin{multlined}\varphi(\xi)=\left( \dfrac{(n-1)}{(n-6)(n^2-4)}(Tr_g (A_g- A_{paneitz})(\xi)1_{n\geq 8}\right.\\ +\left. \dfrac{2^{n} u_0(\xi)\omega_{n-1}}{(n+2)(n (n-4)(n^2-4))^{\frac{n-4}{8}} \omega_n} 1_{n\leq 8}\right),\end{multlined}\end{equation*}
where $\omega_n$ stands for the volume of $\mathbb{S}^n$ and $K_n$ is the sharp constant for the embedding of 
$H^2(\R^n)$ into $L^{2^\ast}(\R^n)$ given by $K_n^{-1}=\dfrac{n (n-4)(n^2-4)\omega_n^{\frac{4}{n}}}{16}$.

\end{prop}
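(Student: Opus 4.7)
My plan is to reduce the computation to expanding $J_\varepsilon(u_0 - W_{\delta_\varepsilon(t),\xi})$ and then to carry out that expansion by a classical asymptotic analysis around the bubble. Writing $\phi$ for $\phi_{\delta_\varepsilon(t),\xi}$ and $W$ for $W_{\delta_\varepsilon(t),\xi}$, a Taylor expansion of $J_\varepsilon$ at $u_0-W$ gives
$$J_\varepsilon(u_0-W+\phi) = J_\varepsilon(u_0-W) + \langle J'_\varepsilon(u_0-W),\phi\rangle_{P_g} + O\bigl(\|\phi\|_{P_g}^2\bigr).$$
The linear term rewrites as $\langle R_{\varepsilon,\delta_\varepsilon(t),\xi},\phi\rangle_{P_g}$ since the components along $K_{\delta_\varepsilon(t),\xi}$ drop out of the pairing with $\phi\in K_{\delta_\varepsilon(t),\xi}^{\bot}$; combined with the (appendix) bound $\|R_{\varepsilon,\delta_\varepsilon(t),\xi}\|_{P_g} = O(\varepsilon|\ln\varepsilon|)$ and \eqref{eqprop4.1}, this is $O(\varepsilon^2|\ln\varepsilon|^2) = o(\varepsilon)$. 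The quadratic remainder is controlled similarly, yielding $I_\varepsilon(t,\xi) = J_\varepsilon(u_0-W_{\delta_\varepsilon(t),\xi}) + o(\varepsilon)$.

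Next I would expand the quadratic part $\tfrac12\|u_0-W\|_{P_g}^2$. Since $u_0$ solves $P_g u_0 = u_0^{2^*-1}$, the cross term becomes $\langle u_0,W\rangle_{P_g} = \int_M u_0^{2^*-1}W\,dV$, whose leading contribution in normal coordinates around $\xi$, after rescaling $y=\delta_\varepsilon(t)z$, is a universal multiple of $u_0(\xi)^{2^*-1}\delta_\varepsilon(t)^{(n-4)/2}$. For $\|W\|_{P_g}^2$, expanding the metric as $g_{ij}(\delta z) = \delta_{ij} - \tfrac13 R_{ikjl}(\xi)\delta^2 z^k z^l + O(\delta^3)$ produces a Euclidean leading term plus a correction of order $\delta_\varepsilon(t)^2$ in which the Ricci and scalar curvature contributions from the bilaplacian and from the tensor $A_g$ combine to yield exactly a multiple of $Tr_g(A_g-A_{paneitz})(\xi)$; this is precisely the reason that $A_{paneitz}$ appears in this form, as already computed in \cite{MR2859126} and \cite{pistoia2012stability}.

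For the nonlinear part $\int_M F_\varepsilon(u_0-W)\,dV$ I would split $M$ into a ball $B(\xi,r_0)$ and its complement, on which $W\equiv 0$ so the integrand equals $F_\varepsilon(u_0)$, thereby reconstructing $\int_M F_\varepsilon(u_0)$ up to a controlled piece. In the ball I would use the Taylor identity
$$F_\varepsilon(u_0-W) = F_\varepsilon(W) - W^{2^*-1-\varepsilon}u_0 + \tfrac{2^*-1-\varepsilon}{2}W^{2^*-2-\varepsilon}u_0^2 + O(W^{2^*-3-\varepsilon}|u_0|^3),$$
valid where $W$ dominates $u_0$, combined with interior estimates on the intermediate annulus. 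The logarithmic terms come from the expansion $W^{2^*-\varepsilon} = W^{2^*}(1-\varepsilon\ln W + O(\varepsilon^2))$: under rescaling this produces $\ln\delta_\varepsilon(t)$, hence both $c_3(n)\varepsilon\ln\varepsilon$ and $-c_4(n)\varepsilon\ln t$, the constant $c_4(n)>0$ being a universal multiple of $\int_{\R^n}U^{2^*}\ln(1+|z|^2)\,dz$, and the constant factor $c_1(n) = \tfrac{2}{n}K_n^{-n/4}$ emerging from the normalization $\int_{\R^n} U^{2^*}\,dz = K_n^{-n/4}$.

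Finally, the $\xi$-dependent term of order $\varepsilon$ comes from two competing sources: the curvature correction $\delta_\varepsilon(t)^2 Tr_g(A_g-A_{paneitz})(\xi)$ from $\|W\|_{P_g}^2$, and the residual from $-\int u_0^{2^*-1}W + \int u_0 W^{2^*-1-\varepsilon}$ after the leading cancellation, which scales like $\delta_\varepsilon(t)^{(n-4)/2}u_0(\xi)$. For $n\geq 8$ the choice $\delta_\varepsilon(t)=\sqrt{t\varepsilon}$ tunes the curvature term to order $\varepsilon t$ and the interaction is strictly lower order, except at $n=8$ where $(n-4)/2=2$ and the two scales coincide, forcing both pieces into $\varphi$ and explaining the indicator $1_{n=8}$. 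For $5\leq n\leq 8$ the interaction is the dominant $\xi$-dependent correction and $\delta_\varepsilon(t)=(t\varepsilon)^{2/(n-4)}$ tunes it to order $\varepsilon t$. The main obstacles I expect are (i) extracting the exact coefficient in front of $Tr_g(A_g-A_{paneitz})$, which requires killing odd-in-$z$ terms via symmetry in the normal-coordinate integrals and matching the specific combination of $R_g$ and $Ric_g$ that defines $A_{paneitz}$, and (ii) the $C^1$ uniformity, which forces differentiation of the whole expansion in $t$ and $\xi$ and uses crucially the continuous differentiability of $\phi_{\delta_\varepsilon(t),\xi}$ provided by Proposition \ref{prop4.1}. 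Most of the background computations in the case $A_g=A_{paneitz}$ can be quoted from \cite{MR2859126} and \cite{pistoia2012stability}, as the authors indicate in the introduction.
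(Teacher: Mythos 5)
Your overall strategy is the same as the paper's: reduce $I_\varepsilon$ to $J_\varepsilon(u_0-W_{\delta_\varepsilon(t),\xi})$ via the $\phi$- and $R$-bounds, then expand the quadratic form and the nonlinearity around the bubble using normal coordinates, the Cartan expansion of $\sqrt{|g|}$, and the computations of \cite{MR2859126} and \cite{MR1942129}. The reduction step, the bubble-energy term, the origin of the two logarithms, the curvature coefficient matching $Tr_g(A_g-A_{paneitz})$, the tuning role of $\delta_\varepsilon(t)$, the $n=8$ resonance, and the need for a separate $C^1$ argument are all correctly identified.

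There is, however, a genuine misstep in your treatment of the cross term $\langle u_0,W\rangle_{P_g}=\int_M u_0^{2^*-1}W\,dV$. You assert that after rescaling its leading contribution is a universal multiple of $u_0(\xi)^{2^*-1}\delta_\varepsilon(t)^{(n-4)/2}$, and that the $\xi$-dependent source for $n\leq 8$ is a residual from a "leading cancellation" between $-\int u_0^{2^*-1}W$ and $\int u_0\,W^{2^*-1-\varepsilon}$. Neither is correct. Because $U\sim\alpha_n|z|^{4-n}$ and $\int_{|z|<R}U\,dz\sim R^4$, the rescaled integral for $\int u_0^{2^*-1}W$ is dominated by $|z|\sim r_0/\delta$, where $u_0(\delta z)$ ranges over all of $B_\xi(r_0)$; the leading coefficient is the \emph{nonlocal} quantity $\alpha_n\int_M u_0^{2^*-1}\chi(d(\cdot,\xi))\,d(\cdot,\xi)^{4-n}\,dV$, not $u_0(\xi)^{2^*-1}$ times a constant. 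Moreover, $-\int u_0^{2^*-1}W$ and $\int u_0\,W^{2^*-1-\varepsilon}$ are independent contributions of the same order and do not cancel against each other. In the correct bookkeeping (this is exactly what the paper's decomposition into $I_{1},I_{2},I_{3}$ encodes) the cross term $-\int u_0^{2^*-1}W$ cancels, up to $O(\varepsilon\,\delta^{(n-4)/2})=o(\varepsilon)$, against the piece $+\int u_0^{2^*-1-\varepsilon}W$ coming from Taylor-expanding $-F_\varepsilon(u_0-W)$ around $u_0$ in the region where $u_0$ dominates $W$; meanwhile the localized term $\int u_0\,W^{2^*-1-\varepsilon}\,dV = u_0(\xi)\,\delta^{(n-4)/2}\alpha_n^{2^*-1}\int_{\R^n}(1+|z|^2)^{-(n+4)/2}dz+o(\delta^{(n-4)/2})$, which is convergent near $\xi$, provides the whole $u_0(\xi)$ contribution to $\varphi$. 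If you executed your plan as written you would attribute the wrong (and nonlocal) coefficient to the cross term and would miss the cancellation that removes it; both need to be fixed, but the rest of your argument then goes through as in the paper.
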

\begin{proof}
We begin by proving that
\begin{equation}
\label{1305}
I_\varepsilon (t,\xi)=J_\varepsilon (u_0-W_{\delta_\varepsilon (t),\xi})+o(\varepsilon),
\end{equation}
as $\varepsilon \rightarrow 0$, uniformly with respect to $t$ in compact subsets of $\R^\ast_+$ and points $\xi\in M$ (we will show in Lemma \ref{c1est} that, when $8\leq n\leq 13$, this estimate holds $C^1$ uniformly with respect to $t$ and $\xi$). Indeed, we have
\begin{equation}\label{proofthm2e1}\begin{multlined}
I_\varepsilon (t,\xi)-J_\varepsilon (u_0-W_{\delta_\varepsilon (t),\xi})\\
=
\left\langle u_0-W_{\delta_\varepsilon (t),\xi}-i^\ast (f_\varepsilon(u_0-W_{\delta_\varepsilon (t),\xi})),\phi_{\delta_\varepsilon (t),\xi} \right\rangle_{P_g}+O(\left\|\phi_{\delta_\varepsilon (t),\xi}\right\|^2_{P_g})
\end{multlined}\end{equation}
when $\varepsilon\rightarrow 0$. Using Lemma \ref{reste} and Proposition \ref{prop4.1}, we get
\begin{equation*}\begin{multlined}
\left\langle u_0-W_{\delta_\varepsilon (t),\xi}-i^\ast (f_\varepsilon(u_0-W_{\delta_\varepsilon (t),\xi})),\phi_{\delta_\varepsilon (t),\xi} \right\rangle_{P_g}\\+ O(\left\|\phi_{\delta_\varepsilon (t),\xi}\right\|^2_{P_g})= O(\varepsilon^2 |\ln \varepsilon|^2)=o(\varepsilon).
\end{multlined}\end{equation*}
Now, the  proposition is reduced to estimate $J_\varepsilon (u_0-W_{\delta_\varepsilon (t),\xi})$. We will focus on $C^0$-estimates. The $C^1$-estimates can be obtained using the same argument as in Lemma 4.1 of \cite{MR2542977}. 
Since $u_0$ is a solution of \eqref{eq}, we have
\begin{equation*}\begin{multlined}
J_\varepsilon(u_0-W_{\delta_\varepsilon (t),\xi})
=\dfrac{1}{2}\int_M u_0^{2^\ast}dV + \dfrac{1}{2}\int_M (\Delta_g W_{\delta_\varepsilon (t),\xi})^2 dV\\+\dfrac{1}{2}\int_M A_g(\nabla_g W_{\delta_\varepsilon (t),\xi}, \nabla_g W_{\delta_\varepsilon (t),\xi})dV
+ \dfrac{1}{2}\int_M h W_{\delta_\varepsilon (t),\xi}^2 dV\\-\int_M f_\varepsilon(u_0) W_{\delta_\varepsilon (t),\xi}dV - \int_M F_\varepsilon(u_0-W_{\delta_\varepsilon (t),\xi})dV. 
\end{multlined}\end{equation*}
Using a Taylor expansion with respect to $\varepsilon$, we get
\begin{equation*}\begin{multlined}
\dfrac{1}{2}\int_M u_0^{2^\ast}dV-\dfrac{1}{2^\ast-\varepsilon}\int_M u_0^{2^\ast -\varepsilon}dV\\
= \dfrac{1}{2}\int_M u_0^{2^\ast}dV- \dfrac{1}{2^\ast}(1+\dfrac{\varepsilon}{2^\ast})\int_M u_0^{2^\ast} (1-\varepsilon \ln u_0)dV+O(\varepsilon^2)\\
= (\dfrac{1}{2}-\dfrac{1}{2^\ast})\int_M u_0^{2^\ast} dV+ \dfrac{\varepsilon}{2^\ast}\int_M u_0^{2^\ast} (\ln u_0 - \dfrac{1}{2^\ast})dV+O(\varepsilon^2)
\end{multlined}\end{equation*}
Thus from the two previous equalities, we obtain
\begin{equation}\begin{multlined}
\label{eneI}
J_\varepsilon(u_0-W_{\delta_\varepsilon (t),\xi})= (\dfrac{1}{2}-\dfrac{1}{2^\ast})\int_M u_0^{2^\ast}dV + \dfrac{\varepsilon}{2^\ast}\int_M u_0^{2^\ast} (\ln u_0 - \dfrac{1}{2^\ast})dV\\
+ I_{1,\varepsilon,t,\xi}+I_{2,\varepsilon,t,\xi}+I_{3,\varepsilon,t,\xi}+O(\varepsilon^2),
\end{multlined}\end{equation} 
where 
\begin{equation*}\begin{multlined}
I_{1,\varepsilon,t,\xi}=\dfrac{1}{2}\int_M (\Delta_g W_{\delta_\varepsilon (t),\xi})^2 dV+\dfrac{1}{2}\int_M A_g(\nabla_g W_{\delta_\varepsilon (t),\xi}, \nabla_g W_{\delta_\varepsilon (t),\xi})dV\\
+
\dfrac{1}{2}\int_M h W_{\delta_\varepsilon (t),\xi}^2dV-\int_M F_\varepsilon(W_{\delta_\varepsilon (t),\xi})dV, 
\end{multlined}\end{equation*}
\begin{flalign*}&I_{2,\varepsilon,t,\xi}=\int_M f_\varepsilon(W_{\delta_\varepsilon (t),\xi}) u_0dV,&&\end{flalign*}
and
\begin{equation}\begin{multlined}I_{3,\varepsilon,t,\xi}=-\int_M F_\varepsilon(u_0-W_{\delta_\varepsilon (t),\xi})-F_\varepsilon(u_0)-F_\varepsilon(W_{\delta_\varepsilon (t),\xi})\\+f_\varepsilon(u_0)W_{\delta_\varepsilon (t),\xi}+f_\varepsilon(W_{\delta_\varepsilon (t),\xi})u_0dV.\end{multlined}\end{equation}
We begin by estimating $I_3$. Using Taylor expansion (cf \eqref{dp2}) and rough estimations, we have
\begin{align*}
|I_{3,\varepsilon,t,\xi}| \leq{} &  \left\|(F_\varepsilon(u_0-W_{\delta_\varepsilon (t),\xi})-F_\varepsilon(W_{\delta_\varepsilon (t),\xi})+f_\varepsilon(W_{\delta_\varepsilon (t),\xi})u_0)1_{B(\sqrt{\delta_\varepsilon (t)})}\right\|_{L^1}\\
+& \left\|(F_\varepsilon(u_0-W_{\delta_\varepsilon (t),\xi})-F_\varepsilon(u_0)+f_\varepsilon(u_0)W_{\delta_\varepsilon (t),\xi})1_{M\backslash B(\sqrt{\delta_\varepsilon (t)})}\right\|_{L^1}\\
+& \left\|F_\varepsilon(u_0)1_{B(\sqrt{\delta_\varepsilon (t)})}\right\|_{L^1}+\left\|f_\varepsilon(u_0)W_{\delta_\varepsilon (t),\xi} 1_{B(\sqrt{\delta_\varepsilon (t)})}\right\|_{L^1}\\
+&\left\|F_\varepsilon(W_{\delta_\varepsilon (t),\xi})1_{M\backslash B(\sqrt{\delta_\varepsilon (t)})}\right\|_{L^1}+\left\|u_0 f_\varepsilon(W_{\delta_\varepsilon (t),\xi})1_{M\backslash B(\sqrt{\delta_\varepsilon (t)})}\right\|_{L^1}\\
\leq{} &  \left\|u_0^2 W_{\delta_\varepsilon (t),\xi}^{2^\ast-2-\varepsilon}1_{B(\sqrt{\delta_\varepsilon (t)})}\right\|_{L^1}+\left\|u_0^{2^\ast -2-\varepsilon}W_{\delta_\varepsilon (t),\xi}^2 1_{M\backslash B(\sqrt{\delta_\varepsilon (t)})}\right\|_{L^1}\\
+&\left\|F_\varepsilon(W_{\delta_\varepsilon (t),\xi})1_{M\backslash B(\sqrt{\delta_\varepsilon (t)})}\right\|_{L^1}+\left\|u_0 f_\varepsilon(W_{\delta_\varepsilon (t),\xi})1_{M\backslash B(\sqrt{\delta_\varepsilon (t)})}\right\|_{L^1}\\
+& \left\|F_\varepsilon(u_0)1_{B(\sqrt{\delta_\varepsilon (t)})}\right\|_{L^1}+\left\|f_\varepsilon(u_0)W_{\delta_\varepsilon (t),\xi}1_{B(\sqrt{\delta_\varepsilon (t)})}\right\|_{L^1}\\
\leq{} &  C\left\|u_0^2 W_{\delta_\varepsilon (t),\xi}^{2^\ast-2-\varepsilon}1_{B(\sqrt{\delta_\varepsilon (t)})}\right\|_{L^1}\nonumber+ C\left\|u_0^{2^\ast -2-\varepsilon}W_{\delta_\varepsilon (t),\xi}^2 1_{M\backslash B(\sqrt{\delta_\varepsilon (t)})}\right\|_{L^1}\\
+& O(\delta_\varepsilon (t)^{\frac{n}{2}})
\end{align*}
 Therefore estimating the last two terms and using the definition of $\delta$, we obtain
\begin{equation}
\label{enei1}
|I_{3,\varepsilon,t,\xi}|\leq \left\{\begin{array}{lll}
O(\delta_\varepsilon (t)^{\frac{n}{2}})=O(\varepsilon^{\frac{n}{4}})=o(\varepsilon^2)\ \mathrm{if}\ n>8 \\
O(\delta_\varepsilon (t)^4 |\ln \delta|)=O(\varepsilon^2 |\ln \varepsilon|) \ \mathrm{if}\ n=8\\
O(\delta_\varepsilon (t)^{n-4})=O(\varepsilon^2) \ \mathrm{if}\ n<8.
\end{array}
\right.
\end{equation}
Now, let us estimate $I_{2,\varepsilon,t,\xi}$. We recall that the Cartan expansion of the metric gives
\begin{equation}\label{g}\sqrt{|g|}(x)=1-\dfrac{1}{6}Ric_{ij}x^i x^j-\dfrac{1}{12}\nabla_kRic_{ij}x^ix^jx^k+O(|x|^4),\end{equation}
where $|g|$ stands for the determinant of the metric $g$ in geodesic normal coordinates. Then, using a change of variables, Taylor expansion and by symmetry, we have
\begin{align}
\label{enei2}
 I_{2,\varepsilon,t,\xi} = {} & u_0 (\xi) \omega_{n-1} \alpha_n^{\frac{n+4}{n-4}-\varepsilon}\delta_\varepsilon (t)^{\frac{n-4}{2}(1+\varepsilon)}\nonumber\\
&  \times\int_0^{\frac{r_0}{2\delta_\varepsilon}(t)} 
\dfrac{r^{n-1}}{(1+r^2)^{\frac{n+4}{2}-\varepsilon \frac{n-4}{2}}}(1+O(\delta^2r^2))dr\nonumber\\
& +  O(\delta_\varepsilon(t)^{\frac{n}{2}}+\varepsilon^2 |\ln \delta_\varepsilon (t)|)\nonumber\\
= {}& \dfrac{2 u_0(\xi)\omega_{n-1}\alpha_n^{\frac{n+4}{n-4}}\delta_\varepsilon (t)^{\frac{n-4}{2}} }{n(n+2)}+O(\delta_\varepsilon (t)^{\frac{n}{2}}+\varepsilon^2 |\ln \delta_\varepsilon (t)|)\nonumber\\
= {}&  \dfrac{2^{n+1}u_0(\xi) K_n^{-\frac{n}{4}}\omega_{n-1}\delta_\varepsilon (t)^{\frac{n-4}{2}}}{n(n+2)\alpha_n \omega_n}+O(\delta_\varepsilon (t)^{\frac{n}{2}}+\varepsilon^2 |\ln \delta_\varepsilon (t)|),
\end{align}
where $\alpha_n$ is defined in \eqref{defalpha}. 
Finally, we use the computations of section $4$ of \cite{MR1942129} and the estimate $(4.2)$ of \cite{MR2859126} to estimate $I_{1,\varepsilon,t,\delta}$. 
We notice, using \eqref{g} and by symmetry, that the remaining in equation $(4.2)$ of \cite{MR2859126} (namely $o(\delta_\varepsilon (t)^2)$ ) is actually in $O(\delta_\varepsilon(t)^4)$. 
We thus have
\begin{equation}\begin{multlined}
\label{enei3}
I_{1,\varepsilon,t,\delta}= \dfrac{2}{n}K_n^{-\frac{n}{4}}\left(1-C_n \varepsilon-\dfrac{(n-4)^2}{8}\varepsilon \ln \delta\right. 
\\ +\dfrac{(n-1)}{(n-6)(n^2-4)}(Tr_g (A_g- A_{paneitz}) \delta_\varepsilon (t)^2  1_{n\geq 8})\\
+ o(\varepsilon)+O(\delta_\varepsilon (t)^4)\bigg),
\end{multlined}\end{equation}
where
\begin{equation}\begin{multlined}C_n=2^{n-4}(n-4)^2 \dfrac{\omega_{n-1}}{\omega_n}\int_0^\infty \dfrac{r^{\frac{n-2}{2}}\ln (1+r)}{(1+r)^n}dr\\ +\dfrac{(n-4)^2}{8(n-2)}(1-\dfrac{1}{2}\ln \sqrt{n(n-4)(n^2-4)}).\end{multlined}\end{equation}
Thus, combining \eqref{eneI}, \eqref{enei1}, \eqref{enei2} and \eqref{enei3}, we obtain

\begin{align}
\label{13051}
J_\varepsilon(u_0-W_{\delta_\varepsilon (t),\xi})
=  {} &(\dfrac{1}{2}-\dfrac{1}{2^\ast})\int_M u_0^{2^\ast} + \dfrac{\varepsilon}{2^\ast}\int_M u_0^{2^\ast} (\ln u_0 - \dfrac{1}{2^\ast})dV\nonumber\\
 &+\dfrac{2}{n}K_n^{-\frac{n}{4}}\left(1-C_n \varepsilon-\dfrac{(n-4)^2}{8}\varepsilon \ln \delta_\varepsilon (t) \right.\nonumber\\
 & \quad+ \left. \dfrac{(n-1)}{(n-6)(n^2-4)}(Tr_g (A_g- A_{paneitz}) \delta_\varepsilon (t)^2\right)\nonumber\\
&+ \dfrac{2^{n+1}u_0(\xi) K_n^{-\frac{n}{4}}\omega_{n-1}\delta_\varepsilon (t)^{\frac{n-4}{2}}}{n(n+2)\alpha_n \omega_n}+o(\varepsilon).
\end{align}
The lemma follows from \eqref{1305} and \eqref{13051}.
\end{proof}

The next proposition shows that, in order to construct a solution to \eqref{eq}, we only need to find a critical point for the reduced energy $I_\varepsilon$. 
\begin{prop}
\label{propfin}
 
Given two positive real numbers $a<b$, for $\varepsilon$ small, if $(t_\varepsilon,\xi_\varepsilon)\in (a,b)\times M$ is a critical point of $I_\varepsilon$, then the function
$u_0-W_{\delta_{\varepsilon}(t_\varepsilon),\xi_\varepsilon}+\phi_{\delta_{\varepsilon}(t_\varepsilon),\xi_\varepsilon}$ is a solution of \eqref{eq}.
\end{prop}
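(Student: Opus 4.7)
The plan is to read equation \eqref{eq2} as a Lagrange multiplier identity: because $\phi_{\delta_\varepsilon(t),\xi}\in K^\bot_{\delta_\varepsilon(t),\xi}$ and $\Pi^\bot_{\delta_\varepsilon(t),\xi}$ is the orthogonal projection onto it, there exist real coefficients $c_0(t,\xi),\ldots,c_n(t,\xi)$ such that
\begin{equation*}
u_\varepsilon - i^*\bigl(f_\varepsilon(u_\varepsilon)\bigr) = \sum_{i=0}^{n}c_i(t,\xi)\,Z_{i,\delta_\varepsilon(t),\xi},\qquad u_\varepsilon := u_0 - W_{\delta_\varepsilon(t),\xi} + \phi_{\delta_\varepsilon(t),\xi},
\end{equation*}
where $Z_{0,\delta,\xi}:=Z_{\delta,\xi}$ and $Z_{i,\delta,\xi}:=Z_{\delta,\xi,e_i}$ for $i\geq 1$. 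Since \eqref{eq} is equivalent to $u_\varepsilon = i^*(f_\varepsilon(u_\varepsilon))$, it suffices to prove that $c_i(t_\varepsilon,\xi_\varepsilon)=0$ for every $i$.

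Working in normal coordinates around $\xi_\varepsilon$ and denoting the parameters $(s_0,s_1,\ldots,s_n)=(t,\xi^1,\ldots,\xi^n)$, I would compute the derivatives of the reduced energy by the chain rule and the identity $DJ_\varepsilon(u_\varepsilon)[\psi]=\langle u_\varepsilon-i^*(f_\varepsilon(u_\varepsilon)),\psi\rangle_{P_g}$:
\begin{equation*}
\partial_{s_j}I_\varepsilon(t,\xi) = \bigl\langle u_\varepsilon - i^*(f_\varepsilon(u_\varepsilon)),\,\partial_{s_j}u_\varepsilon\bigr\rangle_{P_g} = \sum_{i=0}^n c_i(t,\xi)\,\bigl\langle Z_i,\,\partial_{s_j}(-W+\phi)\bigr\rangle_{P_g},
\end{equation*}
where the second equality uses that $u_0$ does not depend on $(t,\xi)$. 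The criticality condition $dI_\varepsilon(t_\varepsilon,\xi_\varepsilon)=0$ is therefore the homogeneous linear system $M_\varepsilon c=0$, with $(M_\varepsilon)_{j,i}:=\langle Z_i,\partial_{s_j}(-W+\phi)\rangle_{P_g}$. The proposition will follow once I prove that $M_\varepsilon$ is invertible for $\varepsilon$ small.

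To establish invertibility, I would first show the $\partial_{s_j}\phi$ contribution is subleading: differentiating $\langle\phi,Z_i\rangle_{P_g}=0$ in the parameter gives $\langle\partial_{s_j}\phi,Z_i\rangle_{P_g}=-\langle\phi,\partial_{s_j}Z_i\rangle_{P_g}$, which by Proposition \ref{prop4.1} and the scaling $\|\partial_{s_j}Z_i\|_{P_g}=O(\delta_\varepsilon^{-1})$ is $o(\delta_\varepsilon^{-1})$. For the leading $-\langle Z_i,\partial_{s_j}W\rangle_{P_g}$ terms, a direct pullback through $\exp_\xi(\delta_\varepsilon\cdot)$ shows, using the Euclidean identities $\partial_\delta U_{\delta,0}|_{\delta=1}=V_0$ and $\partial_{y^k}U_{1,y}|_{y=0}\in\R V_k$, that
\begin{equation*}
\partial_\delta W_{\delta,\xi}=\tfrac{\alpha_n(n-4)}{2\delta}\,Z_{\delta,\xi}+o(\delta^{-1}),\qquad \partial_{\xi^k}W_{\delta,\xi}=\tfrac{\alpha_n(n-4)}{\delta}\,Z_{\delta,\xi,e_k}+o(\delta^{-1}),
\end{equation*}
the corrections coming from the cutoff $\chi$ and from the deviation of $g$ from the Euclidean metric. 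Combined with \eqref{ortho}, dividing the first row of $M_\varepsilon$ by $\delta_\varepsilon'(t_\varepsilon)/\delta_\varepsilon$ and the remaining rows by $1/\delta_\varepsilon$ makes the rescaled matrix converge to a diagonal matrix with nonzero entries proportional to $\|\Delta_{eucl}V_i\|_{L^2(\R^n)}^2$, hence invertible. This forces all $c_i(t_\varepsilon,\xi_\varepsilon)=0$, so $u_\varepsilon$ solves \eqref{eq}. The main technical obstacle is the uniform (in $t\in[a,b]$ and $\xi\in M$) control of the off-diagonal entries and of the lower-order corrections in the pullback, ensuring that the asymptotic diagonal dominance of $M_\varepsilon$ holds with constants independent of the parameters.
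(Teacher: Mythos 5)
Your overall strategy is the same as the paper's: read the projected equation \eqref{eq2} as $u_\varepsilon - i^*(f_\varepsilon(u_\varepsilon)) = \sum_i c_i Z_i$, differentiate the reduced functional $I_\varepsilon$ along $(t,\xi)$ using the chain rule and the fact that $u_0$ is parameter-independent, and then show that the criticality condition gives a homogeneous linear system in the $c_i$ whose matrix is (asymptotically) diagonal with nonvanishing entries proportional to $\|\Delta_{eucl} V_i\|^2_{L^2(\mathbb{R}^n)}$. This is exactly what the paper does, modulo phrasing as a linear-algebra invertibility argument versus the paper's sequential bookkeeping of $\lambda_{i,\alpha}$.

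There is, however, one genuine gap in your control of the $\phi$-contribution to the $t$-row. You invoke the blanket scaling $\|\partial_{s_j}Z_i\|_{P_g}=O(\delta_\varepsilon^{-1})$ for all parameters $s_j$ and deduce $\langle\partial_{s_j}\phi,Z_i\rangle_{P_g}=o(\delta_\varepsilon^{-1})$. This suffices for the $\xi$-rows, where the $W$-contribution is of size $O(\delta_\varepsilon^{-1})$, but it does \emph{not} suffice for the $t$-row: after you divide the $t$-row by $\delta_\varepsilon'/\delta_\varepsilon = O(1)$, the $W$-contribution to the diagonal entry is $O(1)$, whereas your bound on the $\phi$-contribution is only $o(\delta_\varepsilon^{-1})$, which in general diverges. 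Concretely, with $\|\phi\|_{P_g}=O(\varepsilon|\ln\varepsilon|)$ and $\delta_\varepsilon=(t\varepsilon)^{2/(n-4)}$ for $n<8$, the quantity $\varepsilon|\ln\varepsilon|/\delta_\varepsilon$ is of order $\varepsilon^{1-2/(n-4)}|\ln\varepsilon|$, which tends to $+\infty$ for $n=5$ and $n=6$. The resolution, used by the paper, is that the true scaling in the $t$-direction is the sharper $\|\partial_t Z_{\delta_\varepsilon(t),\xi}\|_{P_g}=O(1)$ (since $\partial_t\delta_\varepsilon = O(\delta_\varepsilon)$ cancels the $O(\delta_\varepsilon^{-1})$ scaling of $\partial_\delta Z$), so that $|\langle\phi,\partial_t Z_i\rangle_{P_g}|=O(\varepsilon|\ln\varepsilon|)=o(1)$. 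With this replacement your asymptotic-diagonal-dominance argument closes uniformly for all $n\geq 5$.
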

\begin{proof}
Let $(\xi_\alpha)_\alpha$ be a sequence of points of $M$ and suppose that $(t_\alpha)_\alpha$ and $(\varepsilon_\alpha)_\alpha$ are two sequences of real numbers such that $\varepsilon_\alpha \underset{\alpha\rightarrow \infty}{\longrightarrow} 0$, $a\leq t_\alpha \leq b$ and $(t_\alpha,\xi_\alpha)$ is a critical point of $I_{\varepsilon_\alpha}$ for all $\alpha \in \N$. To simplify notations, we set, for $i=1,\ldots ,n$,
$$Z_{0,\alpha}= Z_{\delta_{\varepsilon_\alpha}(t_\alpha),\xi_\alpha}\ and\ Z_{i,\alpha}=Z_{\delta_{\varepsilon_\alpha}(t_\alpha),\xi_\alpha,e_i}.$$
Since $\phi_{\delta_{\varepsilon_\alpha}(t_\alpha),\xi_\alpha}$ is a solution of \eqref{eq2} by Proposition \ref{prop4.1}, there exist real numbers $\lambda_{i,\alpha}$, $i=0,\ldots , n$ such that 
\begin{equation}
\label{rob69}
DJ_{\varepsilon_\alpha} (u_0-W_{\delta_{\varepsilon_\alpha}(t_\alpha),\xi_\alpha}+\phi_{\delta_{\varepsilon_\alpha}(t_\alpha),\xi_\alpha})=\sum_{i=0}^n \lambda_{i,\alpha} \left\langle Z_{i,\alpha}, .\right\rangle_{P_g}.
\end{equation}
Using the previous equality, we see that
\begin{equation}
\label{rob65}
\dfrac{\partial I_{\varepsilon_\alpha}}{\partial t}(t_\alpha,\xi_\alpha)=\sum_{i=0}^n \lambda_{i,\alpha} \left\langle Z_{i,\alpha},\dfrac{\partial}{\partial t}(-W_{\delta_{\varepsilon_\alpha}(t_\alpha),\xi_\alpha}+\phi_{\delta_{\varepsilon_\alpha}(t_\alpha),\xi_\alpha}) \right\rangle_{P_g}.
\end{equation}
A simple computation gives
\begin{equation}
\label{rob66}
\dfrac{\partial}{\partial t}(W_{\delta_{\varepsilon_\alpha}(t_\alpha),\xi_\alpha})|_{t=t_\alpha}=\dfrac{\tilde{C}_n}{t_\alpha}Z_{0,\alpha},
\end{equation}
where $\tilde{C}_n=\alpha_n$ if $n<8$ and $\tilde{C}_n=\dfrac{\alpha_n (n-4)}{4}$ if $n\geq 8$ (see \eqref{defalpha} for the definition of $\alpha_n$).
Taking the derivative of $\left\langle Z_{\delta_{\varepsilon_\alpha}(t_\alpha),\xi_\alpha},\phi_{\delta_{\varepsilon_\alpha}(t_\alpha),\xi_\alpha}\right\rangle_{P_g}=0$ with respect to $t$, we obtain
\begin{equation}
\label{rob67}
\left\langle \dfrac{\partial}{\partial t}Z_{\delta_{\varepsilon_\alpha}(t_\alpha)|_{t=t_\alpha},\xi_\alpha},\phi_{\delta_{\varepsilon_\alpha}(t_\alpha),\xi_\alpha}\right\rangle_{P_g}=-\left\langle Z_{\delta_{\varepsilon_\alpha}(t_\alpha),\xi_\alpha},\dfrac{\partial}{\partial t}\phi_{\delta_{\varepsilon_\alpha}(t_\alpha),\xi_\alpha}|_{t=t_\alpha}\right\rangle_{P_g}.
\end{equation}
Since a straight forward computation gives $\left\|\dfrac{\partial}{\partial t}Z_{\delta_{\varepsilon_\alpha}(t_\alpha),\xi_\alpha}|_{t=t_\alpha} \right\|_{P_g}=O(1) $, from \eqref{eqprop4.1}, \eqref{rob65}, \eqref{rob66} and \eqref{rob67}, we deduce that
\begin{equation}
\label{rob70}
\dfrac{\partial I_{\varepsilon_\alpha}}{\partial t}(t_\alpha,\xi_\alpha)= -\dfrac{\tilde{C}_n}{t_\alpha}\lambda_{0,\alpha} \left\|\Delta_{eucl} V_0\right\|_{L^2(\R^n)}^2+o(\sum_{i=0}^n \lambda_{i,\alpha} ),
\end{equation}
where $o(1) \underset{\alpha \rightarrow +\infty}{\longrightarrow} 0$. Arguing the same way and noting that
$$\dfrac{\partial}{\partial y_i}(W_{\delta_{\varepsilon_\alpha}(t_\alpha),\exp_{\xi_\alpha}(y)})|_{y=0}=\dfrac{\alpha_n (n-4)}{\delta_{\varepsilon_\alpha}(t_\alpha)}Z_{i,\alpha}+R_{i,\alpha}$$
where $R_{i,\alpha}\underset{\alpha\rightarrow +\infty}{\longrightarrow} 0$ in $H^2(M)$, and

$$\left\|\dfrac{\partial}{\partial y_i}Z_{j,\delta_{\varepsilon_\alpha}(t_\alpha),\exp_{\xi_\alpha}(y)}|_{y=0} \right\|_{P_g}=O(\dfrac{1}{\delta_{\varepsilon_\alpha}(t_\alpha)}) ,$$
we obtain
\begin{equation}
\label{rob71}
\delta_{\varepsilon_\alpha}(t_\alpha)\dfrac{\partial I_{\varepsilon_\alpha}}{\partial y_i}(t_\alpha,\exp_{\xi_\alpha}(y))|_{y=0}= -\lambda_{i,\alpha} \left\|\Delta_{eucl} V_i\right\|_{L^2(\R^n)}^2+o(\sum_{i=0}^n \lambda_{i,\alpha} ). 
\end{equation}
Therefore, from \eqref{rob69}, \eqref{rob70} and \eqref{rob71}, it follows that if $(t_\alpha ,\xi_\alpha)$ is a critical point of $I_{\varepsilon_\alpha}$ then $u_0-W_{\delta_{\varepsilon_\alpha}(t_\alpha),\xi_\alpha}+\phi_{\delta_{\varepsilon_\alpha}(t_\alpha),\xi_\alpha}$ is a solution of \eqref{eq}.
\end{proof}
We are now in position to prove the theorems.

\section{Proof of the theorems.}
We begin by proving Theorem \ref{thm1}.

\begin{proof}[Proof of Theorem \ref{thm1}.]
We set $\textit{G}: \R^\ast_+ \times M\rightarrow \R$ the function defined by
$$\textit{G}(t,\xi)=-c_4(n)\ln t+c_1(n)\varphi(\xi)t,$$
where $c_4(n),\ c_1(n)\ \mathrm{and}\ \varphi(\xi)$ are defined in \eqref{fin}. From Proposition \ref{energiered}, we have
\begin{equation}\label{convergence}\lim_{\varepsilon \rightarrow 0}\dfrac{1}{\varepsilon}(I_\varepsilon (t,\xi)-c_5(n,u_0)-c_2(n,u_0)\varepsilon-c_3(n) \varepsilon \ln \varepsilon)=\textit{G}(t,\xi),\end{equation}
$C^1$ uniformly with respect to $\xi\in M$ and $t$ in compact subset of $\R^\ast_+$. We will consider two cases depending on the dimension of the manifold.\\
\\
\textbf{First case : $8\leq n\leq 13$.}\\

 We argue as in \cite{MR2542977}. Let $\xi_0$ be the $C^1$ stable critical point of $\varphi$ such that $\varphi(\xi_0)>0$
 and set $$t_0=\dfrac{c_4(n)}{c_1(n)\varphi(\xi_0)}>0.$$
Identifying the tangent space at $\xi$ with $\R^n$ we define the map $H$ from $[0,1]\times \R^+ \times \R^n$
into  $\R^{n+1}$ by
\begin{equation*}\begin{multlined}
H(s,t,\xi)= s \left(\dfrac{\partial G(t,\exp_\xi(y))}{\partial t}, \dfrac{\partial G(t,\exp_\xi(y))}{\partial y_1}|_{y=0},\ldots, \dfrac{\partial G(t,\exp_\xi(y))}{\partial y_n}|_{y=0}\right)\\
\!+\! (1-s) \left(t-t_0, \dfrac{\partial (\varphi \circ \exp_\xi(y))}{\partial y_1}|_{y=0},\ldots,
\dfrac{\partial (\varphi \circ \exp_\xi(y))}{\partial y_n}|_{y=0}\right)\!.
\end{multlined}\end{equation*}
 By the invariance of the Brower degree via homotopy, we have that $(t_0,\xi_0)$ is a $C^1$
 stable critical point of $G$. From Proposition \ref{energiered} and standard properties of the Brower degree (see \emph{e.g.} \cite{MR1373430}), there exists a couple $(t_\varepsilon ,\xi_\varepsilon)$
 of critical points of $I_\varepsilon$ converging to $(t_0,\xi_0)$.\\
\\
\textbf{Second case : $5\leq n< 8$ and $n>13$.}\\

 Since $c_4(n)$ and $c_1(n)$ are positive, we have
$$\lim_{t\rightarrow 0^+}\textit{G}(t,\xi)=\lim_{t\rightarrow \infty}\textit{G}(t,\xi)=+\infty,$$
uniformly in $\xi \in M$. Therefore, from \eqref{convergence}  we deduce that, for $\varepsilon$ small enough,there exists a couple $(t_\varepsilon ,\xi_\varepsilon)$ 
which is a minimum for the functional $I_\varepsilon$ in $(a,b)\times M$ where $a,b$ are positive constants not depending on $\varepsilon$.
This implies from Proposition \ref{propfin} that $u_0-W_{\delta_{\varepsilon}(t_\varepsilon),\xi_\varepsilon}-\phi_{\delta_{\varepsilon}(t_\varepsilon),\xi_\varepsilon}$ 
is a solution of \eqref{eq}. Thus Theorem \ref{thm1} is established.
\end{proof}

Finally, we prove Theorem \ref{thm2}.
\begin{proof}[Proof of Theorem \ref{thm2}.]
The proof of Theorem \ref{thm2} will follow closely the proof of Theorem \ref{thm1} therefore we will only sketch it. We restrict ourselves to the case where $9\leq n \leq 11$ (the case $5\leq n\leq 8$ is contained in Theorem \ref{thm1}). The main difference is that here we will take $\delta_\varepsilon (t_\varepsilon)= (t_\varepsilon \varepsilon)^{\frac{2}{n-4}}$, for $9\leq n\leq 11$. We will only point out the impact of this choice in the two key estimates, namely the estimate of $\phi_{\delta_\varepsilon (t),\xi}$ in Proposition \ref{prop4.1} (given in Lemma \ref{reste}) and the estimate of the reduced energy (see Proposition \ref{energiered}). Let us first consider the error estimate i.e. Lemma \ref{reste}. With our new choice of $\delta (t_\varepsilon)$, it is immediate to check that the leading term in the expansion of Lemma \ref{reste} will be given by the term $\|f_0 (W_{\delta_\varepsilon (t),\xi})-P(W_{\delta_\varepsilon (t),\xi}) \|_{L^{\frac{2n}{n+4}}} $. This implies that Lemma \ref{reste} will rewrite as 
\begin{equation}
\label{newreste}
\| i^\ast (f_\varepsilon(u_0-W_{\delta_\varepsilon (t),\xi}))-u_0+W_{\delta_\varepsilon (t),\xi} \|_{P_g} =0( \delta_\varepsilon (t)^2)=0( \varepsilon^{\frac{4}{n-4}}).
\end{equation}
Therefore we deduce that 
\begin{equation}
\label{newestphi}
\|\phi_{\delta_\varepsilon (t),\xi}\|_{P_g}=0( \varepsilon^{\frac{4}{n-4}}),
\end{equation} 
where $\phi_{\delta_\varepsilon (t),\xi}$ is the function defined in Proposition \ref{prop4.1}. Now, let us consider the changes that occur in Proposition \ref{energiered}. Using \eqref{proofthm2e1}, \eqref{newreste} and \eqref{newestphi}, we obtain that, for $9\leq n\leq 11$,
$$I_\varepsilon (t,\xi)-J_\varepsilon (u_0 -W_{\delta_\varepsilon (t),\xi})=0(\|\phi_{\delta_\varepsilon (t),\xi}\|_{P_g}^2 )  0( \delta_\varepsilon^4 (t))=0(\varepsilon^{\frac{8}{n-4}})=o(\varepsilon).$$
Then, it only remains to compute $J_\varepsilon (u_0 -W_{\delta_\varepsilon (t),\xi})$. Being a bit careful with the different remainings apppearing in the proof of Proposition \ref{energiered} and using that $A_g=A_{paneitz}$, we see that
\begin{align*}
J_\varepsilon(u_0-W_{\delta_\varepsilon (t),\xi})
=  {} &(\dfrac{1}{2}-\dfrac{1}{2^\ast})\int_M u_0^{2^\ast} + \dfrac{\varepsilon}{2^\ast}\int_M u_0^{2^\ast} (\ln u_0 - \dfrac{1}{2^\ast})dV\nonumber\\
 &+\dfrac{2}{n}K_n^{-\frac{n}{4}}\left(1-C_n \varepsilon-\dfrac{(n-4)}{4}\varepsilon \ln (t\varepsilon ) \right)\nonumber\\
&+ \dfrac{2^{n+1}u_0(\xi) K_n^{-\frac{n}{4}}\omega_{n-1} t\varepsilon}{n(n+2)\alpha_n \omega_n}+o(\varepsilon).
\end{align*}
Using this last estimate, we can argue exactly as in the case $5\leq n<8$ of the proof of Theorem \ref{thm1}. This concludes the proof of Theorem \ref{thm2}. 
\end{proof}

\section{Appendix.}
In this section, we will give an estimate of the error $R_{\varepsilon , \delta_\varepsilon (t),\xi}$ (see Proposition \ref{prop4.1}) and complete the proof of Proposition \ref{energiered} by showing that \eqref{1305} holds $C^1$ uniformly with respect to $t$ in compact subsets of $\R^\ast_+$ and $\xi \in M$ when $8\leq n\leq 13$. Let us begin with the estimate of the error.
\begin{lem}
\label{reste}
Given two positive real numbers $a<b$, there exists a positive constant $C^\prime_{a,b}$ such that for $\varepsilon$ small, for any real number $t\in [a,b]$ and any point $\xi \in M$, there holds
 $$\left\|i^\ast (f_\varepsilon (u_0-W_{\delta_\varepsilon (t),\xi}))-u_0+W_{\delta_\varepsilon (t) , \xi}\right\|_{P_g} \leq C^\prime_{a,b}\varepsilon |\ln \varepsilon |$$

\end{lem}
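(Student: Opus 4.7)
The starting point is that, since $i^\ast$ inverts $P_g$ and $u_0$ solves \eqref{eqin}, we have $u_0=i^\ast(f_0(u_0))$ and $W_{\delta_\varepsilon(t),\xi}=i^\ast(P_g W_{\delta_\varepsilon(t),\xi})$. Abbreviating $W:=W_{\delta_\varepsilon(t),\xi}$ and $\delta:=\delta_\varepsilon(t)$, this lets us rewrite the quantity to be estimated as
\begin{equation*}
i^\ast\bigl(f_\varepsilon(u_0-W)\bigr)-u_0+W = i^\ast(R),\qquad R:=f_\varepsilon(u_0-W)-f_0(u_0)+P_g W.
\end{equation*}
Because $i^\ast:L^{2n/(n+4)}(M)\to H^2(M)$ is continuous and $\|\cdot\|_{P_g}$ is equivalent to $\|\cdot\|_{H^2}$, the proof reduces to showing $\|R\|_{L^{2n/(n+4)}}\leq C_{a,b}\,\varepsilon|\ln\varepsilon|$ uniformly in $t\in[a,b]$ and $\xi\in M$.

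I would then split $R=A+B+C$ with
\begin{align*}
A &:= f_\varepsilon(u_0-W)-f_0(u_0-W),\\
B &:= f_0(u_0-W)+W^{2^\ast-1}-u_0^{2^\ast-1},\\
C &:= P_g W - W^{2^\ast-1}.
\end{align*}
The three pieces encode, respectively, the shift $\varepsilon\to 0$ inside the nonlinearity, the purely nonlinear interaction between $u_0$ and the bubble, and the geometric/cutoff error coming from approximating a solution of $P_g W=W^{2^\ast-1}$ on $(M,g)$ by the Euclidean bubble. The term $C$ is a classical computation: in normal coordinates at $\xi$ the rescaled profile $U$ satisfies $\Delta_{eucl}^2U=U^{2^\ast-1}$, so the Cartan expansion \eqref{g}, the rescaling $W(\delta y)=\delta^{(4-n)/2}U(y)$ and direct scaling of the lower-order terms $-\mathrm{div}_g(A_g\,dW)+hW$ and of the cutoff remainder give $\|C\|_{L^{2n/(n+4)}}=O(\delta^2)$, which by \eqref{delta} is $O(\varepsilon)$ in every admissible dimension. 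For $B$, the Taylor bound \eqref{dp2} applied with $\alpha=W$, $\beta=-u_0$, $\theta=2^\ast-2$ yields the pointwise control $|B|\leq C(W^{2^\ast-2}u_0+u_0^{2^\ast-2}W)$ plus lower-order terms; a direct change of variables shows that $\|W^{2^\ast-2}\|_{L^{2n/(n+4)}}$ is $O(\delta^{(n-4)/2})$ for $n<12$, $O(\delta^4|\ln\delta|^{(n+4)/(2n)})$ for $n=12$, and $O(\delta^4)$ for $n>12$, and inserting \eqref{delta} once more gives $\|B\|_{L^{2n/(n+4)}}=O(\varepsilon)$.

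The decisive contribution is the exponent-shift term $A$. Factorizing,
\begin{equation*}
A=f_0(u_0-W)\bigl(|u_0-W|^{-\varepsilon}-1\bigr),\qquad\bigl||u_0-W|^{-\varepsilon}-1\bigr|\leq C\varepsilon\bigl(1+\bigl|\ln|u_0-W|\bigr|\bigr),
\end{equation*}
so pointwise $|A|\leq C\varepsilon|u_0-W|^{2^\ast-1}\bigl(1+|\ln|u_0-W||\bigr)$. Using $|u_0-W|\leq u_0+W$, the rescaling $W(\delta y)=\delta^{(4-n)/2}U(y)$ yields
\begin{equation*}
\int_M W^{2^\ast}|\ln W|^{\frac{2n}{n+4}}\,dV_g\leq C\,|\ln\delta|^{\frac{2n}{n+4}},
\end{equation*}
while the contribution of the region where $u_0$ dominates $W$ is uniformly bounded. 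Hence $\|A\|_{L^{2n/(n+4)}}=O(\varepsilon|\ln\delta|)=O(\varepsilon|\ln\varepsilon|)$, since $\delta$ is a positive power of $\varepsilon$ on $[a,b]$. Collecting the three bounds and using continuity of $i^\ast$ gives the claim.

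The main technical obstacle is the $\ln|u_0-W|$ factor in $A$: it has an integrable singularity on the thin zero set of $u_0-W$ and produces a $|\ln\delta|\sim|\ln\varepsilon|$ factor from the concentration region near $\xi$, and one must verify that no additional logarithmic factor accumulates. I would split $M$ into $\{d_g(\cdot,\xi)\leq\delta\}$, $\{\delta<d_g(\cdot,\xi)\leq r_0\}$ and $\{d_g(\cdot,\xi)>r_0\}$, handle each contribution by the rescaling argument above, and check that only a single $|\ln\varepsilon|$ survives. All remaining steps are routine applications of the Taylor expansions \eqref{dp1}--\eqref{dp2}, the Cartan expansion \eqref{g}, and standard rescaling.
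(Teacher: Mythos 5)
Your decomposition $R = A + B + C$ is different from the paper's and arguably cleaner. The paper's proof splits $f_\varepsilon(u_0 - W)$ around $f_\varepsilon(u_0)$ and $f_\varepsilon(W)$ and then compares each of $f_\varepsilon(u_0), f_\varepsilon(W)$ separately against $P_g u_0, P_g W$, so that the $\delta^{(n-4)\varepsilon/2}$ rescaling factor (the true source of the $\varepsilon|\ln\varepsilon|$) appears inside the bubble term $I_3 = \|f_\varepsilon(W) - P_g W\|_{L^{2n/(n+4)}}$. You instead pull the entire exponent shift into the single term $A = f_\varepsilon(u_0 - W) - f_0(u_0 - W)$ and handle the $\varepsilon = 0$ interaction $B$ and the bubble error $C$ separately. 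That isolation of $A$ as the unique origin of the logarithm is the right intuition, and your computation for $A$ (pointwise bound by $C\varepsilon|u_0 - W|^{2^\ast - 1}(1 + |\ln|u_0 - W||)$, rescaling giving a single factor $|\ln\delta| \sim |\ln\varepsilon|$) is correct and matches what the paper obtains from the $\delta^{(n-4)\varepsilon/2}\chi^{2^\ast - 1 - \varepsilon} - \chi^{2^\ast - 1}$ term. Your estimate of $B$ via \eqref{dp2} also agrees with the paper's $I_1$.

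However, your claim that $\|C\|_{L^{2n/(n+4)}} = O(\delta^2)$ in every dimension is incorrect, and it glosses over the very point that dictates the choice of $\delta_\varepsilon(t)$ in low dimensions. Outside $B_\xi(r_0/2)$, where the cutoff is active, $W \sim \delta^{(n-4)/2}$ and the derivatives of $\chi$ dominate, so the cutoff contribution to $\|P_g W - f_0(W)\|_{L^{2n/(n+4)}}$ is $O(\delta^{(n-4)/2})$, not $O(\delta^2)$. For $n \geq 9$ this is $o(\delta^2)$ and harmless; for $n = 8$ the two contributions balance and an extra $|\ln\delta|$ appears (the paper's estimate is $O(\delta^2|\ln\delta|) = O(\varepsilon|\ln\varepsilon|)$); and for $n \in \{5,6,7\}$ the cutoff term $O(\delta^{(n-4)/2})$ strictly dominates $O(\delta^2)$. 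It is precisely to cancel this term that \eqref{delta} switches to $\delta_\varepsilon(t) = (t\varepsilon)^{2/(n-4)}$ for $n \leq 8$, so that $\delta^{(n-4)/2} = t\varepsilon$. Your stated bound $O(\delta^2) = O(\varepsilon^{4/(n-4)})$ would be an artificial overestimate of the rate, and more importantly it means you have not actually verified that $\|C\| = O(\varepsilon)$ in low dimensions; it just happens that the correct bound $O(\delta^{(n-4)/2}) = O(\varepsilon)$ also lands under the lemma's threshold. The final conclusion of the lemma is therefore unharmed, but you should replace the claim $\|C\| = O(\delta^2)$ by the dimension-dependent bound $O(\delta^2)$ for $n > 8$, $O(\delta^2|\ln\delta|)$ for $n = 8$, and $O(\delta^{(n-4)/2})$ for $n < 8$, and observe that each of these is $O(\varepsilon|\ln\varepsilon|)$ under \eqref{delta}.
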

\begin{proof} 

All the estimates will be uniform in $t,\xi$ and $\varepsilon$. Since $i^\ast$ is continuous, we have
\begin{equation}\begin{multlined}\left\|i^\ast (f_\varepsilon (u_0-W_{\delta_\varepsilon (t),\xi}))-u_0+W_{\delta_\varepsilon (t) , \xi}  \right\|_{P_g}\\= O\left(\left\| (f_\varepsilon (u_0-W_{\delta_\varepsilon (t),\xi}))-P_g(u_0-W_{\delta_\varepsilon (t) , \xi})\right\|_{L^\frac{2n}{n+4}}\right)
\end{multlined}\end{equation}
where $f_\varepsilon (u)=|u|^{2^\ast-2-\varepsilon}u$.
The triangular inequality yields to
\begin{align} 
\lefteqn{\left\|i^\ast (f_\varepsilon (u_0-W_{\delta_\varepsilon (t),\xi}))-u_0+W_{\delta_\varepsilon (t) , \xi}  \right\|_{P_g}}\nonumber\\
& \quad\quad\quad\quad\quad\leq  C  \left\|f_\varepsilon (u_0-W_{\delta_\varepsilon (t),\xi})-f_\varepsilon (u_0)+f_\varepsilon (W_{\delta_\varepsilon (t),\xi})\right\|_{L^\frac{2n}{n+4}}\nonumber\\
&\quad\quad\quad\quad\quad\ \ + C  \left\|f_\varepsilon (u_0)-P_g (u_0)\right\|_{L^\frac{2n}{n+4}}\nonumber\\
&\quad\quad\quad\quad\quad\ \ +  C  \left\|f_\varepsilon (W_{\delta_\varepsilon (t) ,\xi})-P_g (W_{\delta_\varepsilon (t),\xi})\right\|_{L^\frac{2n}{n+4}}\nonumber\\
{} &\quad\quad\quad\quad\quad\leq C (I_1 +I_2+I_3).
\end{align}

We first estimate $I_1$. By triangular inequality we get

\begin{align}
I_1\leq{} &  \left\|(f_\varepsilon (u_0-W_{\delta_\varepsilon (t),\xi})+f_\varepsilon (W_{\delta_\varepsilon (t),\xi}) ) 1_{B_\xi (\sqrt{\delta_\varepsilon (t)})} \right\|_{L^\frac{2n}{n+4}}\nonumber\\
&+\left\|(f_\varepsilon (u_0-W_{\delta_\varepsilon (t),\xi})-f_\varepsilon (u_0))1_{M\backslash B_{\xi}(\sqrt{\delta_\varepsilon (t)})}\right\|_{L^\frac{2n}{n+4}}\nonumber\\
&+\left\|f_\varepsilon (W_{\delta_\varepsilon (t),\xi})1_{M\backslash B_{\xi}(\sqrt{\delta_\varepsilon (t)})} \right\|_{L^\frac{2n}{n+4}} +   \left\|f_\varepsilon (u_0)1_{ B_{\xi}(\sqrt{\delta_\varepsilon (t)})}  \right\|_{L^\frac{2n}{n+4}}.
\end{align}
From Taylor expansion (\emph{e.g.} using \eqref{dp2}) and Young inequality,  we obtain 
\begin{equation*}\begin{multlined}
\left\|(f_\varepsilon (u_0-W_{\delta_\varepsilon (t),\xi})+f_\varepsilon (W_{\delta_\varepsilon (t),\xi}) ) 1_{B_\xi (\sqrt{\delta_\varepsilon (t)})} \right\|_{L^\frac{2n}{n+4}}\\
\leq  C \left\|u_0 W_{\delta_\varepsilon (t),\xi}^{2^\ast -2-\varepsilon} 1_{B_\xi (\sqrt{\delta_\varepsilon (t)})} \right\|_{L^\frac{2n}{n+4}}
+C\left\|u_0^{2^\ast-1-\varepsilon}1_{B_\xi (\sqrt{\delta_\varepsilon (t)})} \right\|_{L^\frac{2n}{n+4}},
\end{multlined}\end{equation*}
as well as 
\begin{equation*}
\begin{multlined}
\left\|(f_\varepsilon (u_0-W_{\delta_\varepsilon (t),\xi})-f_\varepsilon (u_0))1_{M\backslash B_{\xi}(\sqrt{\delta_\varepsilon (t)})}\right\|_{L^\frac{2n}{n+4}}\\
\leq  C\left\|u_0^{2^\ast-2-\varepsilon}W_{\delta_\varepsilon (t),\xi}1_{M\backslash B_{\xi}(\sqrt{\delta_\varepsilon (t)})}\right\|_{L^\frac{2n}{n+4}}+C\left\|W_{\delta_\varepsilon (t),\xi}^{2^\ast-1-\varepsilon}1_{M\backslash B_{\xi}(\sqrt{\delta_\varepsilon (t)})}\right\|_{L^\frac{2n}{n+4}}.
\end{multlined}\end{equation*}

\noindent Using polar coordinates and a change of variables we deduce that:
$$I_1 =\left\{\begin{array}{lc}
O(\delta_\varepsilon^{\frac{n+4}{4}}(t))=O(\varepsilon^{\frac{n+4}{8}})\ \quad& \mathrm{if}\quad n>12, \\
O(\delta^4_\varepsilon (t) |\ln \delta_\varepsilon (t)|^{\frac{2}{3}})=O(\varepsilon^2 |\ln \varepsilon|^{\frac{2}{3}})\ \quad& \mathrm{if}\quad n=12,\\
O(\delta^{\frac{n-4}{2}}_\varepsilon (t))=O(\varepsilon)\ \quad& \mathrm{if}\quad n<12.
\end{array}
\right.
$$
Concerning $I_2$ we easily get from Taylor's expansion that
$$I_2=\left\|f_\varepsilon (u_0)-f_0(u_0) \right\|_{L^\frac{2n}{n+4}}=O(\varepsilon).$$
We now estimate $I_3$.
First we recall that with the help of the exponential map we can identify $B_\xi(R_0)$ with a neighborhood of the origin in $\R^n$.
Therefore with this chart  we may define  $\chi_{\xi,\delta_\varepsilon (t)}(.):=\chi ( d(.\delta_\varepsilon (t),\xi))$. Using triangular inequality and a change of variables, we then get 
\begin{equation*}\begin{multlined}
I_3 \leq  C\delta^{\frac{n-4}{2}\varepsilon}\left\|\chi_{\xi,\delta_\varepsilon (t)}^{2^\ast-1-\varepsilon}(U^{2^\ast-1-\varepsilon}-U^{2^\ast-1} )\right\|_{L^\frac{2n}{n+4}}\\
+C\left\|(\delta^{\frac{n-4}{2}\varepsilon}\chi_{\xi,\delta_\varepsilon (t)}^{2^\ast-1-\varepsilon}
-\chi_{\xi,\delta_\varepsilon (t)}^{2^\ast-1})U^{2^\ast -1}_{\delta_\varepsilon (t) ,\xi}  \right\|_{L^\frac{2n}{n+4}}\\
+ \left\|f_0 (W_{\delta_\varepsilon (t),\xi})-P_g(W_{\delta_\varepsilon (t),\xi}) \right\|_{L^\frac{2n}{n+4}}.
\end{multlined}\end{equation*}
Following the computation in the proof of lemma 2.3 of \cite{MR2859126} we obtain these three estimates: 
\[\left\|\chi_{\xi,\delta_\varepsilon (t)}^{2^\ast-1-\varepsilon}(U^{2^\ast-1-\varepsilon}-U^{2^\ast-1} )\right\|_{L^\frac{2n}{n+4}}=O(\varepsilon),\]

\[\left\|(\delta^{\frac{n-4}{2}}_\varepsilon (t)\chi_{\xi,\delta_\varepsilon (t)}^{2^\ast-1-\varepsilon}-\chi_{\xi,\delta_\varepsilon (t)}^{2^\ast-1})U^{2^\ast -1}  \right\|_{L^\frac{2n}{n+4}}=O(\varepsilon|\ln \delta_\varepsilon (t)|),\]
and 
$$\left\|f_0 (W_{\delta_\varepsilon (t),\xi})-P_g(W_{\delta_\varepsilon (t),\xi}) \right\|_{L^\frac{2n}{n+4}}\leq C\left\{\begin{array}{lll}
\delta^2_\varepsilon (t)=O(\varepsilon)\ \mathrm{if}\ n>8 ,\\
\delta^2_\varepsilon (t) |\ln \delta_\varepsilon (t)|=O(\varepsilon |\ln \varepsilon |)\ \mathrm{if}\ n=8,\\
\delta^{\frac{n-4}{2}}_\varepsilon (t)=O(\varepsilon) \ \mathrm{if}\ n<8.
\end{array}
\right.$$ 
This concludes the proof.

\end{proof} 
 Finally, let us prove that \eqref{1305} holds $C^1$ uniformly with respect to $t$ in compact subsets of $\R^\ast_+$ and $\xi \in M$ when $8\leq n\leq 13$.
\begin{lem}
\label{c1est}
If $8\leq n\leq 13$, we have
$$I_\varepsilon (t,\xi)=J_\varepsilon (u_0-W_{\delta_\varepsilon (t),\xi})+o(\varepsilon)$$
$C^1$ uniformly with respect to $t$ in compact subsets of $\R^\ast_+$ and $\xi \in M$.
\end{lem}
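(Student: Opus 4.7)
The plan is as follows. Set $G_\varepsilon(t,\xi) := I_\varepsilon(t,\xi) - J_\varepsilon(u_0 - W_{\delta_\varepsilon(t),\xi})$, so that the $C^0$ bound $G_\varepsilon = O(\varepsilon^2|\ln\varepsilon|^2) = o(\varepsilon)$ already follows from identity \eqref{proofthm2e1} combined with Lemma \ref{reste} and Proposition \ref{prop4.1}. Working in normal coordinates centred at $\xi$ (where $\partial_{\xi_i}$ denotes differentiation in these coordinates), it remains to show that $\partial_t G_\varepsilon$ and $\partial_{\xi_i} G_\varepsilon$ are $o(\varepsilon)$ uniformly in $(t,\xi)\in [a,b]\times M$. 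First order Taylor expansion of $J_\varepsilon$ at $u_0 - W_{\delta_\varepsilon(t),\xi}$ gives
$$G_\varepsilon = \langle R_{\varepsilon,\delta_\varepsilon(t),\xi},\, \phi_{\delta_\varepsilon(t),\xi}\rangle_{P_g} + Q_\varepsilon(t,\xi),$$
where $R_{\varepsilon,\delta_\varepsilon(t),\xi} = u_0 - W_{\delta_\varepsilon(t),\xi} - i^\ast(f_\varepsilon(u_0 - W_{\delta_\varepsilon(t),\xi}))$ is the error controlled by Lemma \ref{reste} and $Q_\varepsilon(t,\xi) = \int_0^1 (1-s)\, D^2 J_\varepsilon(u_0 - W_{\delta_\varepsilon(t),\xi} + s\phi_{\delta_\varepsilon(t),\xi})[\phi_{\delta_\varepsilon(t),\xi},\phi_{\delta_\varepsilon(t),\xi}]\,ds$ is the quadratic Taylor remainder.

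The proof then reduces to two technical derivative estimates. First, by differentiating the fixed point equation \eqref{eq2} in $t$ (respectively $\xi_i$) and applying $L_{\varepsilon,\delta_\varepsilon(t),\xi}^{-1}$, which is uniformly bounded by Lemma \ref{inver}, the implicit function theorem yields the $C^1$ regularity of $\phi_{\delta_\varepsilon(t),\xi}$ together with
$$\|\partial_t \phi_{\delta_\varepsilon(t),\xi}\|_{P_g} + \sum_i \|\partial_{\xi_i} \phi_{\delta_\varepsilon(t),\xi}\|_{P_g} = O\bigl(\delta_\varepsilon(t)^{-1}\,\varepsilon|\ln\varepsilon|\bigr),$$
the single loss of $\delta^{-1}$ coming from differentiating the concentrated profile $W_{\delta_\varepsilon(t),\xi}$. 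Second, retracing the proof of Lemma \ref{reste} with one additional derivative in $t$ or $\xi_i$ hitting $W_{\delta_\varepsilon(t),\xi}$ in each of the integrals $I_1,I_2,I_3$ treated there, we obtain the analogous bound
$$\|\partial_t R_{\varepsilon,\delta_\varepsilon(t),\xi}\|_{P_g} + \sum_i \|\partial_{\xi_i} R_{\varepsilon,\delta_\varepsilon(t),\xi}\|_{P_g} = O\bigl(\delta_\varepsilon(t)^{-1}\,\varepsilon|\ln\varepsilon|\bigr).$$
With $\delta_\varepsilon(t) = \sqrt{t\varepsilon}$ for $n\geq 8$, these combine into
$$\partial_t \langle R_{\varepsilon,\delta_\varepsilon(t),\xi}, \phi_{\delta_\varepsilon(t),\xi}\rangle_{P_g} = O\bigl(\delta_\varepsilon(t)^{-1}(\varepsilon|\ln\varepsilon|)^2\bigr) = O\bigl(\varepsilon^{3/2}|\ln\varepsilon|^2\bigr) = o(\varepsilon),$$
and similarly for the $\xi_i$-derivatives. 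The derivative of $Q_\varepsilon$ is handled by expanding $\partial_t Q_\varepsilon$ as contributions of type $D^2 J_\varepsilon[\partial_t\phi,\phi]$ and $D^3 J_\varepsilon[\partial_t(-W+s\phi),\phi,\phi]$ and estimating each with H\"older and Sobolev inequalities together with the Taylor bounds \eqref{dp1}--\eqref{dp2}.

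The main obstacle is the sharpness of the estimate on $\partial_t R_\varepsilon$: derivatives act simultaneously on cutoffs, on $\exp_\xi$, on the rescaled bubble $U_{\delta,\xi}$ and on $f_\varepsilon$ itself, and one must carefully verify that no compounded loss beyond a single factor $\delta^{-1}$ ever occurs. The dimensional restriction $8\leq n\leq 13$ surfaces in the treatment of $\partial_t Q_\varepsilon$: for $n>13$ the $D^3 J_\varepsilon$ contribution involves $|u_0 - W + s\phi|^{2^\ast-4-\varepsilon}$, which fails to be sufficiently integrable against the rescaled bubbles to give the required $o(\varepsilon)$ bound once the extra factor $\delta^{-1}_\varepsilon(t)$ is present, while for $8\leq n\leq 13$ the choice $\delta_\varepsilon(t)=\sqrt{t\varepsilon}$ provides exactly enough decay to absorb the single $\delta^{-1}$-loss generated by the differentiation, and the uniformity in $(t,\xi)$ is inherited from the uniformity of Lemmas \ref{inver}, \ref{reste} and Proposition \ref{prop4.1}.
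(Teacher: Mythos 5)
Your route is structurally different from the paper's, and the difference is where the difficulty lies. You want to write $I_\varepsilon - J_\varepsilon(u_0-W) = \langle R_{\varepsilon,\delta,\xi},\phi\rangle_{P_g} + Q_\varepsilon$ by Taylor expansion and then differentiate, which forces you to produce quantitative bounds on $\|\partial_t\phi_{\delta_\varepsilon(t),\xi}\|_{P_g}$ and $\|\partial_{y_i}\phi_{\delta_\varepsilon(t),\xi}\|_{P_g}$. You assert these are $O(\delta_\varepsilon(t)^{-1}\varepsilon|\ln\varepsilon|)$ ``by the implicit function theorem,'' but the IFT only gives continuous differentiability, not a rate; extracting the rate means re-running the contraction argument on $\partial_t\phi$ and controlling $\partial_t L^{-1}$, $\partial_t N$ and $\partial_t R_{\varepsilon,\delta,\xi}$ with explicit norms, which is a substantial piece of work you do not supply. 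The paper avoids this entirely: it expands $\partial_t(I_\varepsilon-J_\varepsilon)$ by the chain rule, collects the problematic contribution as $DJ_\varepsilon(u_0-W+\phi)[\partial_t\phi]=\sum_i\lambda_i\langle Z_i,\partial_t\phi\rangle_{P_g}$, and then uses the differentiated orthogonality relation $\langle Z_i,\partial_t\phi\rangle_{P_g}=-\langle\partial_t Z_i,\phi\rangle_{P_g}$ together with $|\lambda_i|=O(\varepsilon\ln\varepsilon)$, so that only bounds on $\phi$ itself, never on $\partial\phi$, are needed. That is the essential idea you are missing.

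Two further points. First, $\partial_t W_{\delta_\varepsilon(t),\xi}=\tfrac{\tilde C_n}{t}Z_0$ with $\|Z_0\|_{P_g}=O(1)$, so for the $t$-derivative there is in fact no $\delta^{-1}$ loss; the loss only appears for $\partial_{y_i}W=\tfrac{\alpha_n(n-4)}{\delta}Z_i+R_{\delta,\xi}$. This is harmless to your conclusion but shows the heuristic ``one factor $\delta^{-1}$ per derivative'' is not where the accounting should come from. Second, in your discussion of $\partial_t Q_\varepsilon$ the exponent should be $2^\ast-3-\varepsilon$ (coming from $f''_\varepsilon$), not $2^\ast-4-\varepsilon$; moreover for $n\geq 12$ one has $2^\ast-2<1$ so $f''_\varepsilon$ is not an honest function and the ``$D^3J_\varepsilon$'' expansion must be replaced by the pointwise Taylor bound \eqref{dp2}, exactly as the paper does. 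You are right that the dimensional constraint $8\leq n\leq 13$ arises in controlling the quadratic remainder integrated against $\delta^{-1}Z_i$ --- in the paper this is the estimate $\|(u_0-W_{\delta_\varepsilon(t),\xi})^{2^\ast-3-\varepsilon}Z_i\|_{L^{n/4}}=O(\varepsilon^{-1/4})$ feeding into $I_5$ --- but the gap above has to be closed before that observation can be used.
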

\begin{proof}
To simplify notations, we set, for $i=1,\ldots ,n$,
$$Z_{0}= Z_{\delta_{\varepsilon}(t),\xi}\ and\ Z_{i}=Z_{\delta_{\varepsilon}(t),\xi,e_i}.$$
We recall that
$$\dfrac{\partial}{\partial t}(W_{\delta_{\varepsilon}(t),\xi})=\dfrac{\tilde{C}_n}{t}Z_{0},$$
where $\tilde{C}_n=\dfrac{\alpha_n (n-4)}{4}$ (see \eqref{defalpha} for the definition of $\alpha_n$). Taking the derivative with respect to $t$ to $I_\varepsilon(t,\xi)-J_\varepsilon (u_0-W_{\delta_\varepsilon (t),\xi})$, we obtain
\begin{align}
\label{1505e1}
\lefteqn{\dfrac{\partial I_\varepsilon}{\partial t}(t,\xi)-\dfrac{\partial J_\varepsilon}{\partial t}(u_0-W_{\delta (t),\xi})}\nonumber\\
&= \int_M P_g (\phi_{\delta_{\varepsilon}(t),\xi}) \dfrac{\partial}{\partial t}W_{\delta_{\varepsilon}(t),\xi} dV\nonumber\\
& \quad- \int_M (f_\varepsilon(u_0-W_{\delta_{\varepsilon}(t),\xi}+\phi_{\delta_{\varepsilon}(t),\xi})-f_\varepsilon(u_0-W_{\delta_{\varepsilon}(t),\xi}))\dfrac{\partial W_{\delta_{\varepsilon}(t),\xi}}{\partial t}dV\nonumber\\
&\quad+ DJ_\varepsilon(u_0 -W_{\delta_{\varepsilon}(t),\xi}+\phi_{\delta_{\varepsilon}(t),\xi})[\dfrac{\partial \phi_{\delta_{\varepsilon}(t),\xi}}{\partial t}]\nonumber\\
&= \dfrac{\tilde{C}_n}{t}\left(\int_M (P_g (Z_0) - f_\varepsilon^\prime (u_0-W_{\delta_{\varepsilon}(t),\xi})Z_0)\phi_{\delta_{\varepsilon}(t),\xi} dV \right. \nonumber\\
&\quad\quad\quad\quad\quad\quad-\left. \int_M \big(f_\varepsilon(u_0-W_{\delta_{\varepsilon}(t),\xi}+\phi_{\delta_{\varepsilon}(t),\xi})-f_\varepsilon(u_0-W_{\delta_{\varepsilon}(t),\xi})\right.\nonumber\\
&\quad\quad\quad\quad\quad\quad\quad\quad\quad-f_\varepsilon^\prime (u_0-W_{\delta_{\varepsilon}(t),\xi})\phi_{\delta_{\varepsilon}(t),\xi}\big)Z_0dV\bigg)\nonumber\\
&\quad+DJ_\varepsilon(u_0 -W_{\delta_{\varepsilon}(t),\xi}+\phi_{\delta_{\varepsilon}(t),\xi})[\dfrac{\partial \phi_{\delta_{\varepsilon}(t),\xi}}{\partial t}]\nonumber\\
&= I_1+I_2+I_3,
\end{align}
where
\begin{align}
I_1&= \dfrac{\tilde{C}_n}{t}\int_M (P_g (Z_0) - f_\varepsilon^\prime (u_0-W_{\delta_{\varepsilon}(t),\xi})Z_0)\phi_{\delta_{\varepsilon}(t),\xi} dV,\\
I_2&=-\dfrac{\tilde{C}_n}{t}\int_M (f_\varepsilon(u_0-W_{\delta_{\varepsilon}(t),\xi}+\phi_{\delta_{\varepsilon}(t),\xi})-f_\varepsilon(u_0-W_{\delta_{\varepsilon}(t),\xi})\nonumber\\
&\quad-f_\varepsilon^\prime (u_0-W_{\delta_{\varepsilon}(t),\xi})\phi_{\delta_{\varepsilon}(t),\xi})Z_0dV,\\
I_3&=DJ_\varepsilon(u_0 -W_{\delta_{\varepsilon}(t),\xi}+\phi_{\delta_{\varepsilon}(t),\xi})[\dfrac{\partial \phi_{\delta_{\varepsilon}(t),\xi}}{\partial t}].\end{align}
In the same way, recalling that
$$\dfrac{\partial}{\partial y_i}(W_{\delta_{\varepsilon}(t),\exp_\xi (y)})|_{y=0}=\dfrac{\alpha_n (n-4)}{\delta_{\varepsilon}(t)}Z_{i}+R_{\delta_\varepsilon (t),\xi},$$
where $\left\|R_{\delta_\varepsilon (t),\xi} \right\|_{P_g}= O(\delta_\varepsilon (t)^2) $ (see (6.13) of \cite{micheletti2009role}) and using \eqref{eqprop4.1}, we find
\begin{align}
\label{1505e2}
\lefteqn{\dfrac{\partial I_\varepsilon}{\partial y_i}(t,\exp_\xi (y))|_{y=0}-\dfrac{\partial J_\varepsilon}{\partial y_i}(u_0-W_{\delta (t),\exp_\xi (y)})|_{y=0}} {}&\nonumber\\
 &{}=\dfrac{\alpha_n (n-4)}{\delta_{\varepsilon}(t)}\left(\int_M (P_g (Z_i) - f_\varepsilon^\prime (u_0-W_{\delta_{\varepsilon}(t),\xi})Z_i)\phi_{\delta_{\varepsilon}(t),\xi} dV \right. \nonumber\\
&\quad\quad \quad\quad\quad\quad- \int_M \big(f_\varepsilon(u_0-W_{\delta_{\varepsilon}(t),\xi}+\phi_{\delta_{\varepsilon}(t),\xi})-f_\varepsilon(u_0-W_{\delta_{\varepsilon}(t),\xi})\nonumber\\
&\quad\quad\quad\quad\quad\quad\quad\quad- f_\varepsilon^\prime (u_0-W_{\delta_{\varepsilon}(t),\xi})\phi_{\delta_{\varepsilon}(t),\xi}\big)Z_idV\bigg)\nonumber\\
&+DJ_\varepsilon (u_0 -W_{\delta_{\varepsilon}(t),\xi}+\phi_{\delta_{\varepsilon}(t),\xi})[\dfrac{\partial \phi_{\delta_{\varepsilon}(t),\exp_\xi (y)}}{\partial y_i}]|_{y=0}\nonumber\\
&+ O(\left\|R_{\delta_\varepsilon (t),\xi} \right\|_{P_g}\left\|\phi_{\delta_\varepsilon (t),\xi} \right\|_{P_g})\nonumber\\
={}& I_4+I_5+I_6+o(\varepsilon),
\end{align}
where
\begin{align}
I_4= {} & \dfrac{\alpha_n (n-4)}{\delta_{\varepsilon}(t)}\int_M (P_g (Z_i) - f_\varepsilon^\prime (u_0-W_{\delta_{\varepsilon}(t),\xi})Z_i)\phi_{\delta_{\varepsilon}(t),\xi} dV,\nonumber\\
I_5= {} &-\dfrac{\alpha_n (n-4)}{\delta_{\varepsilon}(t)}\int_M (f_\varepsilon(u_0-W_{\delta_{\varepsilon}(t),\xi}+\phi_{\delta_{\varepsilon}(t),\xi})-f_\varepsilon(u_0-W_{\delta_{\varepsilon}(t),\xi})\nonumber\\
&\quad\quad\quad\quad\quad\quad\quad\quad- f_\varepsilon^\prime (u_0-W_{\delta_{\varepsilon}(t),\xi})\phi_{\delta_{\varepsilon}(t),\xi})Z_idV,\nonumber\\
I_6= {} &DJ_\varepsilon (u_0 -W_{\delta_{\varepsilon}(t),\xi}+\phi_{\delta_{\varepsilon}(t),\xi})[\dfrac{\partial \phi_{\delta_{\varepsilon}(t),\exp_\xi (y)}}{\partial y_i}]|_{y=0}.\nonumber
\end{align}
We begin by estimating the terms $I_3$ and $I_6$. We recall that
$$DJ_{\varepsilon} (u_0-W_{\delta_{\varepsilon}(t),\xi}+\phi_{\delta_{\varepsilon}(t),\xi})[.]=\sum_{i=0}^n \lambda_{i} \left\langle Z_{i}, .\right\rangle_{P_g}.$$
Arguing the same way as in Proposition \ref{propfin}, we have
$$DJ(u_0 -W_{\delta_{\varepsilon}(t),\xi}+\phi_{\delta_{\varepsilon}(t),\xi})[\dfrac{\partial \phi_{\delta_{\varepsilon}(t),\xi}}{\partial t}]= 
O\left(\left\|\phi_{\delta_{\varepsilon}(t),\xi} \right\|_{L^{\frac{2n}{n+4}}}\sum_{i=0}^n |\lambda_i|\right),$$
and
$$DJ_\varepsilon (u_0 -W_{\delta_{\varepsilon}(t),\xi}+\phi_{\delta_{\varepsilon}(t),\xi})[\dfrac{\partial \phi_{\delta_{\varepsilon}(t),\exp_\xi (y)}}{\partial y_i}]|_{y=0}=
O\left(\dfrac{\left\|\phi_{\delta_{\varepsilon}(t),\xi} \right\|_{L^{\frac{2n}{n+4}}}\sum_{i=0}^n |\lambda_i|}{\delta_\varepsilon(t)}\right).$$
We claim that $|\lambda_i| =O(\varepsilon \ln \varepsilon)$, for all $i=0,\ldots ,n$. Using \eqref{ortho}, to prove the claim, we just need to show that $DJ(u_0 -W_{\delta_{\varepsilon}(t),\xi}+\phi_{\delta_{\varepsilon}(t),\xi})[Z_i ]=O(\varepsilon \ln \varepsilon)$, for all $i=0,\ldots ,n$. Since $\phi_{\delta_{\varepsilon}(t),\xi} \in K_{\delta_{\varepsilon}(t),\xi}^\bot$, 
using H\"older inequality, \eqref{eqprop4.1}, Lemma \ref{reste} and rough estimates, we have
\begin{align*}
\lefteqn{DJ_\varepsilon(u_0-W_{\delta_{\varepsilon}(t),\xi}+\phi_{\delta_{\varepsilon}(t),\xi})[Z^i]}{} &\\
&= \int_M P_g(u_0-W_{\delta_{\varepsilon}(t),\xi}) Z_i dV -\int_M f_\varepsilon(u_0-W_{\delta_{\varepsilon}(t),\xi}+\phi_{\delta_{\varepsilon}(t),\xi})Z_idV\\
&=  \int_M (P_g(u_0-W_{\delta_{\varepsilon}(t),\xi}) - f_\varepsilon(u_0-W_{\delta_{\varepsilon}(t),\xi}))Z_i dV\\
&\quad-\int_M (f_\varepsilon(u_0-W_{\delta_{\varepsilon}(t),\xi}+\phi_{\delta_{\varepsilon}(t),\xi}) - f_\varepsilon(u_0-W_{\delta_{\varepsilon}(t),\xi}))Z_idV\\
&\leq \left\|P_g(u_0-W_{\delta_{\varepsilon}(t),\xi})-f_\varepsilon(u_0-W_{\delta_{\varepsilon}(t),\xi})\right\|_{L^{\frac{2n}{n+4}}} \left\|Z_i\right\|_{L^{2^\ast}}\\
&\quad+ \left\|f_\varepsilon(u_0-W_{\delta_{\varepsilon}(t),\xi}+\phi_{\delta_{\varepsilon}(t),\xi})-f_\varepsilon(u_0-W_{\delta_{\varepsilon}(t),\xi})\right\|_{L^{\frac{2n}{n+4}}} \left\|Z_i\right\|_{L^{2^\ast}}\\
&\leq O(\left\|P_g(u_0-W_{\delta_{\varepsilon}(t),\xi})-f_\varepsilon(u_0-W_{\delta_{\varepsilon}(t),\xi})\right\|_{L^{\frac{2n}{n+4}}})\\
&\quad+O( \left\|\phi_{\delta_{\varepsilon}(t),\xi} \right\|_{L^{\frac{2n}{n-4}}}(\left\|W_{\delta_{\varepsilon}(t),\xi}\right\|_{L^{\frac{2n}{n-4}}}^{2^\ast-2-\varepsilon}+\left\|\phi_{\delta_{\varepsilon}(t),\xi}\right\|_{L^{\frac{2n}{n-4}}}^{2^\ast -2-\varepsilon} ))\\
&\leq  O(\varepsilon \ln \varepsilon ).
\end{align*}

Combining the previous estimates, we get
\begin{eqnarray}
\label{1505e3}
DJ_\varepsilon(u_0 -W_{\delta_{\varepsilon}(t),\xi}+\phi_{\delta_{\varepsilon}(t),\xi})\left[\dfrac{\partial \phi_{\delta_{\varepsilon}(t),\xi}}{\partial t}\right]=O(\varepsilon^2 (\ln \varepsilon)^2),
\end{eqnarray}
and 
\begin{equation}
\label{1505e4}
DJ_\varepsilon (u_0 -W_{\delta_{\varepsilon}(t),\xi}+\phi_{\delta_{\varepsilon}(t),\xi})\left[\dfrac{\partial \phi_{\delta_{\varepsilon}(t),\exp_\xi (y)}}{\partial y_i}\right]|_{y=0}=O(\varepsilon^{\frac{3}{2}} (\ln \varepsilon)^2).
\end{equation}
Now let us estimate $I_2$ and $I_5$. Noticing that, if $8\leq n\leq 13$,
 $$\left\|(u_0-W_{\delta_{\varepsilon}(t),\xi})^{2^\ast -3-\varepsilon}Z_i\right\|_{L^{\frac{n}{4}}}=O(\varepsilon^{-\frac{1}{4}}),$$ we obtain, using \eqref{dp1}, for $i=0,\ldots ,n$,
\begin{align}
\label{1505e5}
\lefteqn{\int_M (f_\varepsilon(u_0-W_{\delta_{\varepsilon}(t),\xi}+\phi_{\delta_{\varepsilon}(t),\xi})-f_\varepsilon(u_0-W_{\delta_{\varepsilon}(t),\xi})-f_\varepsilon^\prime (u_0-W_{\delta_{\varepsilon}(t),\xi})\phi_{\delta_{\varepsilon}(t),\xi})Z_i dV}{}&\nonumber\\
&\leq  C \left\{
\begin{array}{lr}\displaystyle\int_M (u_0-W_{\delta_{\varepsilon}(t),\xi})^{2^\ast -3-\varepsilon}\phi_{\delta_{\varepsilon}(t),\xi}^2 Z_idV\ &\mathrm{if}\ 12\leq n\leq 13, \\
\displaystyle\int_M ((u_0-W_{\delta_{\varepsilon}(t),\xi})^{2^\ast -3-\varepsilon}\phi_{\delta_{\varepsilon}(t),\xi}^2+\phi_{\delta_{\varepsilon}(t),\xi}^{2^\ast-1-\varepsilon} Z_idV)\ &\mathrm{if}\ 8\leq n< 12, 
\end{array}\right. \nonumber\\
&\leq  C \left\{
\begin{array}{lr}\left\|(u_0-W_{\delta_{\varepsilon}(t),\xi})^{2^\ast -3-\varepsilon}Z_i\right\|_{L^{\frac{n}{4}}}\left\|\phi_{\delta_{\varepsilon}(t),\xi}\right\|_{L^{\frac{2n}{n-4}}}^2 \ &\mathrm{if}\ 12\leq n\leq 13, \\
\left\|\phi_{\delta_{\varepsilon}(t),\xi}\right\|_{L^{\frac{2n}{n-4}}}^2\left\|(u_0-W_{\delta_{\varepsilon}(t),\xi})^{2^\ast -3-\varepsilon}Z_i\right\|_{L^{\frac{n}{4}}}&\\ 
\quad+\left\|Z_i\right\|_{L^{\frac{2n}{n-4}}}\left\|\phi_{\delta_{\varepsilon}(t),\xi}\right\|_{L^{\frac{2n}{n-4}}}^{2^\ast-1-\varepsilon} & \mathrm{if}\ 8\leq n< 12, 
 \end{array}\right. \nonumber\\
&\leq  O(\varepsilon^{2-\frac{1}{4}}(\ln \varepsilon)^2))\ when\ 8\leq n\leq 13.
\end{align}
Finally, let us estimate $I_1$ and $I_4$. Since $\left\|P_g (Z_i)-f_\varepsilon^\prime (W_{\delta_{\varepsilon}(t),\xi})Z_i\right\|_{L^{\frac{2n}{n+4}}}=O(\varepsilon \ln \varepsilon)$ (see \cite{MR2859126}, inequality (4.17)) and since, using rough estimates, 
$$\left\|u_0^{2^\ast -2-\varepsilon}Z_i \right\|_{L^{\frac{2n}{n+4}}}+\left\|W_{\delta_{\varepsilon}(t),\xi}^{2^\ast -3-\varepsilon}Z_i\right\|_{L^{\frac{2n}{n+4}}}=O(\varepsilon \ln \varepsilon),$$  
we obtain
\begin{align}
\label{1505e6}
\lefteqn{\int_M (P_g (Z_i) - f_\varepsilon^\prime (u_0-W_{\delta_{\varepsilon}(t),\xi})Z_i)\phi_{\delta_{\varepsilon}(t),\xi} dV} {}\nonumber\\
 {} &\leq  C\left( \left\|P_g (Z_i)-f_\varepsilon^\prime (W_{\delta_{\varepsilon}(t),\xi})Z_i\right\|_{L^{\frac{2n}{n+4}}}\right.\nonumber\\
  &\quad\quad\quad+\left. \left\|(f_\varepsilon^\prime (u_0-W_{\delta_{\varepsilon}(t),\xi})-f_\varepsilon^\prime (W_{\delta_{\varepsilon}(t),\xi}))Z_i \right\|_{L^{\frac{2n}{n+4}}}\right)\left\|\phi_{\delta_{\varepsilon}(t),\xi} \right\|_{L^{2^\ast}}\nonumber\\
{} &\leq  C\varepsilon \ln \varepsilon (\varepsilon \ln \varepsilon + \left\|u_0^{2^\ast -2-\varepsilon}Z_i \right\|_{L^{\frac{2n}{n+4}}}+\left\|W_{\delta_{\varepsilon}(t),\xi}^{2^\ast -3-\varepsilon}Z_i\right\|_{L^{\frac{2n}{n+4}}})\nonumber\\
{} &\leq  O(\varepsilon^2 \ln \varepsilon^2).
\end{align}
The lemma now follows from \eqref{1505e1}, \eqref{1505e2}, \eqref{1505e3}, \eqref{1505e4}, \eqref{1505e5} and \eqref{1505e6}. 

 \end{proof}

\bibliographystyle{plain}
\bibliography{paneitzbib}
\end{document}